\newtheorem{theorem}{Theorem}[section]
\newtheorem*{theorem*}{Theorem}
\newtheorem{proposition}[theorem]{Proposition}
\newtheorem{lemma}[theorem]{Lemma}
\theoremstyle{remark}
\newtheorem{remark}[theorem]{Remark}
\newtheorem{definition}{Definition}
\newtheorem*{notation}{Notation}
\newcommand{\vep}{\varepsilon}
\newcommand{\R}{{\mathbb{R}}}
\newcommand{\C}{{\mathbb{C}}}
\newcommand{\Z}{{\mathbb{Z}}}
\newcommand{\N}{{\mathbb{N}}}
\newcommand{\T}{{\mathbb{T}}}
\newcommand{\Trans}{\operatorname{Aut}}
\newcommand{\Sing}{\Sigma}
\newcommand{\hol}{\operatorname{hol}}
\newcommand{\ind}{\operatorname{\textbf{1}}}
\newcommand{\modulo}{\operatorname{ mod}}
\begin{document}
\title[Ergodic directions for billiards in a strip]{Ergodic directions for billiards
in a strip with periodically located obstacles}
\author[K. Fr\k{a}czek \and C. Ulcigrai]{Krzysztof Fr\k{a}czek \and Corinna Ulcigrai}

\address{Faculty of Mathematics and Computer Science, Nicolaus
Copernicus University, ul. Chopina 12/18, 87-100 Toru\'n, Poland}
 \email{fraczek@mat.umk.pl}
\address{Department of Mathematics\\
University Walk, Clifton\\
Bristol BS8 1TW, United Kingdom}
\email{corinna.ulcigrai@bristol.ac.uk}
\date{\today}

\subjclass[2000]{ 37A40, 37E35}  \keywords{}
%\thanks{Research partially supported by MNiSzW grant N N201
%384834 and Marie Curie "Transfer of Knowledge" program, project
%MTKD-CT-2005-030042 (TODEQ)}
\maketitle
\begin{abstract}
We study the size of the set of ergodic directions for the
directional billiard flows on the infinite band $\R\times [0,h]$
with periodically placed linear barriers of length $0<\lambda<h$.
We prove that the set of ergodic directions is always uncountable.
Moreover, if $\lambda/h\in(0,1)$ is rational the
Hausdorff dimension of the set of ergodic directions is greater
than $1/2$. In both cases (rational and
irrational) we construct explicitly some sets of ergodic
directions.
\end{abstract}

\section{Introduction}
In this paper we consider the following infinite periodic billiard, whose ergodic properties have been object of recent investigation (see e.~g.~\cite{BKM,  Fr-Ul, Hu-We}). Let $T(h,a,\lambda)$ be the billiard table (shown in  Figure \ref{fig_bil}) given by an infinite band $\R\times
[0,h]$ with periodically placed linear barriers of length
$0<\lambda<h$ handling from the lower side of the band
perpendicularly, that is:
\[
T(h,a,\lambda)=(\R\times [0,h])\setminus (a\Z\times[0,\lambda]),
\]
A \emph{billiard trajectory} is the trajectory of a point-mass which moves freely inside the table on
segments of straight lines and undergoes elastic collisions (angle
of incidence equals to the angle of reflection) when it hits the
boundary of the table.
\begin{figure}[h]
\includegraphics[width=0.6\textwidth]{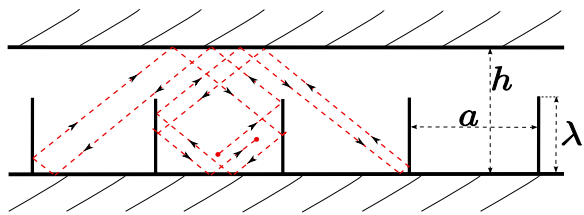}
\caption{Billiard flow on $T(h,a,\lambda)$.}\label{fig_bil}
\end{figure}
The billiard flow $(\varphi_t)_{t \in \R}$ is defined on the subset of
 the phase space $T(h,a,\lambda) \times S^1$  that consists of the points $(x, \theta) \in T(h,a,\lambda) \times S^1$ such
that if $x$ belongs to the boundary of $T(h,a,\lambda)$ then
$\theta$ is an inward direction. For $t \in \R$ and  $(x,\theta )$
in the domain of $(\varphi_t)_{t\in\R}$,  $\varphi_t$ maps $ (x,
\theta)$ to $\varphi_t(x,\theta) = (x_t,\theta_t)$, where $x_t$ is
the point reached after time $t$ by flowing at unit speed along
the billiard trajectory starting at $x$ in direction $\theta$ and
$\theta_t$ is the tangent direction to the trajectory at $x_t$. A
similar billiard in a  semi-infinite band  was studied in
\cite{BKM} in the context of perfect \emph{retroreflectors}. For a
survey on billiards in finite and infinite polygons we
refer the reader to \cite{Gu,Gut}.

Denote by $\Gamma$ the $4$-element group of isometries of $S^1$
generated by the reflections $\theta\mapsto\overline{\theta}$,
$\theta\mapsto-\overline{\theta}$. For every direction $\theta\in
S^1$ the billiard flow on $T(h,a,\lambda)$ has the invariant
subset $T(h,a,\lambda)\times (\Gamma\theta)$ in the phase space.
The billiard flow $(\varphi^{\theta}_t)_{t\in\R}$ restricted to
this set preserves the product of the Lebesgue measure on
$T(h,a,\lambda)$ and the counting measure on the orbit
$\Gamma\theta$.

In  \cite{Fr-Ul} we proved the following result:
\begin{theorem*} [\cite{Fr-Ul}]
If $\lambda/h$ is rational or
belongs to a set $\Delta\subset(0,1)$ of full Lebesgue measure
then for almost every direction $\theta$ the billiard flow
$(\varphi^{\theta}_t)_{t\in\R}$ on $T(h,a,\lambda)$ is \emph{not ergodic}.
\end{theorem*}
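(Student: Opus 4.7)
The plan is to unfold the billiard to a $\Z$-cover of a compact translation surface and reduce non-ergodicity to a statement about essential values of a $\Z$-valued cocycle. Since every boundary angle of $T(h,a,\lambda)$ is a right angle, the Katok--Zemljakov unfolding produces a flat surface $\widetilde M_\theta$ by gluing four isometric copies of $T(h,a,\lambda)$ along their horizontal and vertical sides by reflections; on $\widetilde M_\theta$ the family $(\varphi^\theta_t)_{t\in\R}$ becomes a single translation flow in direction $\theta$. The horizontal translation $x\mapsto x+a$ descends to a free $\Z$-action on $\widetilde M_\theta$ that commutes with the flow, and the quotient $M_\theta=\widetilde M_\theta/a\Z$ is a compact translation surface (whose genus one reads off the cone angles produced at the endpoints of the barriers).

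Next I would pick a horizontal transversal $I\subset M_\theta$ through a regular point, so that the Poincar\'e return map of the translation flow on $M_\theta$ is an interval exchange transformation $T_\theta$, and the infinite cover $\widetilde M_\theta\to M_\theta$ is encoded by a step cocycle $\psi\colon I\to\Z$ counting the net number of fundamental domains of the cover traversed between consecutive returns to $I$. By Kerckhoff--Masur--Smillie, for almost every $\theta$ the IET $T_\theta$ is uniquely ergodic. By Schmidt's theorem on essential values, the translation flow on $\widetilde M_\theta$ is ergodic if and only if the group $E(\psi)\subset\Z$ of essential values of $\psi$ equals $\Z$; hence the theorem is equivalent to showing that $E(\psi)$ is a proper subgroup (in practice, $\{0\}$) for the values of $\lambda/h$ in question.

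For rational $\lambda/h$ the surface $M_\theta$ is square-tiled over a torus, hence Veech; one can write $\psi$ as an explicit difference of indicator functions of subintervals and use the action of the Veech group together with a Rauzy--Veech acceleration to show that, along return times to a well chosen nested sequence of Rokhlin towers, the Birkhoff sums $S_n\psi$ stay uniformly bounded on a set of positive measure. Tightness of $S_n\psi$ along such a subsequence rules out every nonzero essential value. For irrational $\lambda/h$ one instead argues via deviation theory for the Kontsevich--Zorich cocycle over the $\SL(2,\R)$-orbit of $M_\theta$: one identifies the cohomology class of $\psi$ with a vector in the stable or central subspace of the KZ cocycle, so that $|S_n\psi|$ grows strictly slower than $\sqrt n$; Forni-type deviation estimates then yield the same tightness and again force $E(\psi)=\{0\}$. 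The full-measure set $\Delta$ is the set of $\lambda/h$ for which these Diophantine/Oseledets hypotheses hold.

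The main obstacle is the irrational case. The surface $M_\theta$ is generically non-arithmetic, so no Veech-group bookkeeping is available; one has to verify by hand that the cocycle $\psi$ is not a coboundary yet lies in a strictly non-expanding subspace of the KZ cocycle for almost every $\lambda/h$. Producing the set $\Delta$ therefore requires controlling the KZ cocycle uniformly along almost every Teichm\"uller geodesic in the ambient stratum, and carefully checking that the specific cohomology class defined by the barriers has the required slow deviation behaviour; this is the step where most of the work must be concentrated.
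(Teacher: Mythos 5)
First, a point of order: the statement you were asked to prove is not proved anywhere in this paper --- it is quoted from the authors' earlier work \cite{Fr-Ul} purely as motivation --- so the comparison below is with the argument of that reference, not with a proof contained here.

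Your reduction is the right one and agrees with \cite{Fr-Ul}: unfold the right-angled table to a translation surface, observe that the period-$a$ translation exhibits it as a $\Z$-cover of a compact surface determined by a class $\gamma_0$ with $\operatorname{hol}(\gamma_0)=0$, pass to a $\Z$-valued step cocycle $\psi$ over the first-return interval exchange, and invoke Schmidt's theory of essential values together with Kerckhoff--Masur--Smillie for the base. The genuine gap is in the mechanism you propose for showing $E(\psi)\neq\Z$, which is essentially backwards. You aim to place $\gamma_0$ in the stable or central Oseledets subspace of the Kontsevich--Zorich cocycle, deduce $|S_n\psi|=o(\sqrt n)$, and conclude ``tightness'' and hence $E(\psi)=\{0\}$. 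Neither implication holds: sub-polynomial growth of $S_n\psi$ does not give tightness (the sums may still diverge), and tightness along a subsequence does not exclude a nonzero essential value, which only needs \emph{some} return times realizing it (to conclude that $\psi$ is a coboundary one needs tightness of the full sequence $(S_n\psi)_n$). Worse, the hypothesis itself fails generically: for a fixed nonzero class $\gamma_0$ and almost every direction, $\gamma_0$ has a nonvanishing component in an expanding Oseledets direction of the KZ cocycle (the second exponent on the relevant subbundle is positive), so the Birkhoff sums grow polynomially. The proof in \cite{Fr-Ul} runs the other way: it converts this \emph{growth}, together with control of the oscillation of the Birkhoff sums over a sequence of Rauzy--Veech Rokhlin towers, into the statement that no nonzero integer is an essential value; growth alone does not suffice (the simple random walk cocycle grows like $\sqrt n$ yet generates an ergodic $\Z$-extension), and it is exactly this oscillation control that produces the full-measure set $\Delta$ of parameters where the Teichm\"uller geodesic is recurrent and Oseledets-regular. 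The boundedness of intersection numbers that you are trying to establish is in fact the signature of the \emph{exceptional ergodic} directions: it is the condition $k(C)=\pm1$ for infinitely many cylinders in the Hubert--Weiss criterion (Theorem~\ref{thm:Hu-We}) that the present paper exploits to construct them. The same objection defeats your rational case: for a square-tiled surface the induced $PGL(2,\Z)$-action on $H^{(0)}_1$ along the continued fraction of a.e.\ $\theta$ has a positive Lyapunov exponent, so the uniform boundedness of $S_n\psi$ that you invoke simply does not hold for almost every direction.
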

It is hence natural to ask whether there are exceptional ergodic directions. Hubert and Weiss proved in \cite{Hu-We} that if $\lambda/h$ is rational then
the set of ergodic directions contains a dense $G_\delta$ set.  The aim of this paper is to prove the existence of ergodic directions for all irrational
values of the relative  slit length $\lambda/h$. In addition, we also study the size of this exceptional set of ergodic directions in the rational case.
More precisely, in Section~\ref{section:irrational}, we prove that the set of ergodic directions is uncountable when $\lambda/h$ is irrational (see
Theorem~\ref{theorem:irrational}) and for rational $\lambda/h$ we prove that its Hausdorff dimension is greater than $1/2$, (see
Theorem~\ref{theorem:rational}).

In both cases (rational and irrational) we give an explicit
construction of ergodic directions by specifying their continued
fraction expansions. The proofs use an ergodicity criterion from
\cite{Hu-We} (Theorem~\ref{thm:Hu-We}) based on an approximation
of $\theta$ by directions with infinite strips. The main idea is
to study the action of $SL(2,\Z)$ on homology and to cleverly
exploit the symmetries of the system to construct infinite strips.

Combining our results from Section~\ref{section:rational} with the
approach introduced recently by Hooper in \cite{Ho1} one might be
able to describe all invariant ergodic Radon measures for
$(\varphi^{\theta}_t)_{t\in\R}$ whenever $\lambda/h$ is rational
and $\theta$ belongs to a set of positive Hausdorff dimension.

\section{Background material}

\subsection{Directional flows on translation surfaces and
$\Z$-covers}\label{defnmain:sec}
The study of directional billiard flows on any
rational polygon (not necessary compact) can be reduced via an \emph{unfolding} procedure
(introduced by Katok and Zemlyakov in \cite{Ka-Ze}) to the study of
directional flows on a translation surface.  The translation
surface corresponding to the table $T(h,a,\lambda)$ will be
described in Section~\ref{billtotrans}. In this section we briefly
recall some basic definitions related to the notion of translation
surface.

Let $M$ be an oriented surface  (not necessarily compact). A
translation surface $(M,\omega)$ is a complex structure on $M$
together with an nonzero \emph{Abelian differential} $\omega$,
that is a non-zero holomorphic $1$-form.  Let
$\Sing=\Sing_\omega\subset M$ be the set of zeros of $\omega$.
For every $\theta\in S^1 = \R/2\pi \Z $ denote by
$X_\theta=X^{\omega}_\theta$ the directional vector field in
direction $\theta$ on $M\setminus\Sing$, defined by
$i_{X_\theta}\omega =e^{i\theta}$. Then the corresponding
directional flow
$(\phi^{\theta}_t)_{t\in\R}=(\phi^{\omega,\theta}_t)_{t\in\R}$
(also known as \emph{translation flow}) on $M\setminus\Sing$
preserves the area form
$\nu_{\omega}=\frac{i}{2}\omega\wedge\overline{\omega}=\Re(\omega)\wedge\Im(\omega)$.
We will denote by $A(\omega):=\nu_\omega (M)$ the area of the
surface.

Let $(M,\omega)$ be a compact connected translation surface.
Denote by $\langle \,\cdot\,,\,\cdot\,\rangle :H_1(M,\Z)\times
H_1(M,\Z)\to\Z$ the algebraic intersection form.

Recall that a {\em $\Z$-cover} of $M$ is a surface $\widetilde{M}$
with a free totally discontinuous action of the group $\Z$ such
that the quotient manifold $\widetilde{M}/\Z$ is homeomorphic to
$M$. The map $p:\widetilde{M}\to M$ obtained by composition of the
projection $\widetilde{M}\to\widetilde{M}/\Z$ and the
homeomorphism $\widetilde{M}/\Z\to M$ is called a {\em covering
map}. Denote by $\widetilde{\omega}$ the pullback of the form
$\omega$ by the map $p$. Then $(\widetilde{M},\widetilde{\omega})$
is a translation surface as well. The translation flow on
$(\widetilde{M},\widetilde{\omega})$ in direction $\theta$ will be
denoted by $(\widetilde{\phi}^\theta_t)_{t\in\R}$.

All $\Z$-covers of $M$ (up to isomorphism) are in one-to-one
correspondence with homology classes in $H_1(M,\Z)$. The
$\Z$-cover $\widetilde{M}_\gamma$ determined by $\gamma\in
H_1(M,\Z)$, under this correspondence, has the following
properties. If $\sigma$  is a close curve in $M$ and $[\sigma]\in
H_1(M,\Z)$, then $\sigma$  lifts to a path $\widetilde{\sigma}:
[t_0, t_1]\to \widetilde{M}_\gamma$ such that $\sigma(t_1) = n
\cdot \sigma(t_0)$, where  $n:=\langle
\gamma, [\sigma] \rangle \in \mathbb{Z}$ and $\cdot$ denotes the action of $\Z$ on
$(\widetilde{M}_\gamma,\widetilde{\omega}_\gamma)$ by deck
transformations.

Denote by $\hol:H_1(M,\Z)\to \C$ the {\em holonomy map}, i.e.\
$\hol(\gamma)=\int_\gamma\omega$ for every $\gamma\in H_1(M,\Z)$.
As  shown by Hooper
and Weiss (see Proposition~15   in \cite{Ho-We}), $\hol(\gamma)=0$
if and only if for every $\theta\in S^1$ such that
$({\phi}^\theta_t)_{t\in\R}$ is ergodic, the flow
$(\widetilde{\phi}^\theta_t)_{t\in\R}$ on the $\Z$-cover
$(\widetilde{M}_\gamma,\widetilde{\omega}_\gamma)$  is recurrent.
For this reason, following \cite{Ho-We}, if $\gamma \in H_1(M,\Z)$ has $\hol(\gamma)=0$ we say that the $\Z$-cover
$(\widetilde{M}_\gamma,\widetilde{\omega}_\gamma)$ of the
translation surface $(M,\omega)$ given by $\gamma$
is {\em recurrent}.

\subsection{From billiard flows to translation flows on translation
surfaces}\label{billtotrans}
Fix parameters $(h,a,\lambda)$ and a
direction $\theta\in S^1$. One can verify, using the
unfolding process first described in \cite{Ka-Ze}, that  the flow
$(\varphi^{\theta}_t)_{t\in\R}$ on the table $T(h,a,\lambda)$ is
isomorphic to the directional flow
$(\widetilde{\phi}^\theta_t)_{t\in\R}$ on a non-compact
translation surface $(\widetilde{M},\widetilde{\omega})$ which is obtained gluing four
copies of $T(h,a,\lambda)$ along the segments of the same name, as shown in Figure~\ref{unfold1}.
\begin{figure}[h]
\includegraphics[width=1\textwidth]{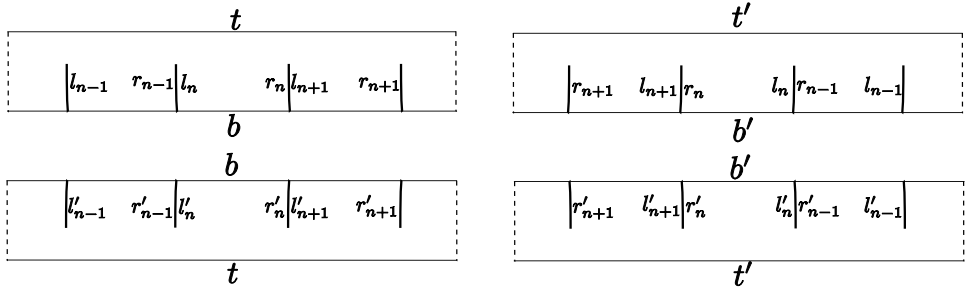}
\caption{Four copies of $T(h,a,\lambda)$}\label{unfold1}
\end{figure}
Moreover,  the surface $(\widetilde{M},\widetilde{\omega})$
can be  represented as gluing two $\Z$-periodic polygons (obtained
by gluing pairs of copies of $T(h,a,\lambda)$ along $b$
and $b'$)  as shown in the Figure~\ref{unfold2} (here $R_n=r_n\cup
r'_n$ and $L_n=l_n\cup l'_n$).
\begin{figure}[h]
\includegraphics[width=1\textwidth]{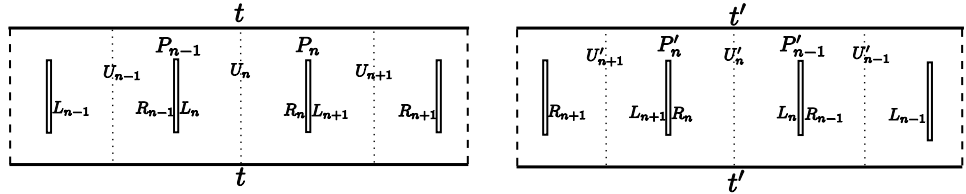}
\caption{The translation surface $(\widetilde{M},\widetilde{\omega})$}\label{unfold2}
\end{figure}
\begin{figure}[h]
\includegraphics[width=0.4\textwidth]{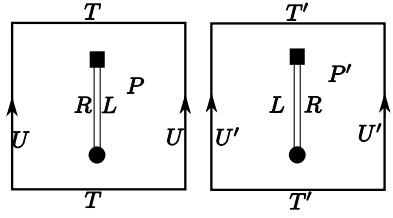}
\caption{The compact surface $(M,\omega)$}\label{unfold3}
\end{figure}
Next, let us cut these polygons into rectangles $P_n$, $P'_n$
along the  segments $U_n$, $U'_n$, $n\in\Z$ (see the
Figure~\ref{unfold3}). It follows that
$(\widetilde{M},\widetilde{\omega})$ is a $\Z$-cover of the
compact translation surface $(M,\omega)\in \mathcal{M}(1,1)$
presented in the  Figure~\ref{unfold3}. More precisely,
$(\widetilde{M},\widetilde{\omega})=(\widetilde{M}_{\gamma_0},\widetilde{\omega}_{\gamma_0})$,
where $\gamma_0=[U-U']$ and hol$(\gamma_0)=0$.
Consequently, the directional billiard flow $(\varphi^\theta_t)_{t\in\R}$ is
isomorphic to the translation flow $(\widetilde{\phi}^\theta_t)_{t\in\R}$  on the recurrent $\Z$-cover  $(\widetilde{M}_{\gamma_0},\widetilde{\omega}_{\gamma_0})$.

\subsection{Moduli space and Teichm\"uller flow}\label{Teich:sec}
Let $M$ be  a compact connected oriented surface of genus $g$ and
let $\Sing\subset M$ be a finite set with cardinality $s\in \mathbb{N}$.  Denote by
$\operatorname{Diff}^+(M,\Sing)$ the group of
orientation-preserving homeomorphisms of $M$ preserving $\Sing$.
Denote by $\operatorname{Diff}_0^+(M,\Sing)$ the subgroup of
elements $\operatorname{Diff}^+(M,\Sing)$ which are isotopic to
the identity. Let us denote by
$\Gamma(M,\Sing):=\operatorname{Diff}^+(M,\Sing)/\operatorname{Diff}_0^+(M,\Sing)$
the {\em mapping-class} group.

 Let $\kappa=(\kappa_1,\ldots,\kappa_s)$ be a family of natural numbers
such that $2g-2 = \sum_{i=1}^s\kappa_i$.
The \emph{stratum}
$\mathcal{M}(\kappa)=\mathcal{M}(M,\Sing,\kappa)$ of the {\em moduli
space of  Abelian differentials} is the space of orbits of
the natural action of   $\operatorname{Diff}^+(M,\Sing)$  on the space of all
Abelian differentials on $M$ with $s$ zeros at $\Sing$ of degrees
$\kappa_1, \dots, \kappa_s$.  The \emph{stratum} $\mathcal{Q}(M,\Sing,\kappa)$ of the {\em Teichm\"uller space of
Abelian differentials} is the space of orbits of the natural
action of $\operatorname{Diff}_0^+(M,\Sing)$ on the space of all
Abelian differentials on $M$ with $s$ zeros at $\Sing$ of degrees
$\kappa_1, \dots, \kappa_s$. Thus
$\mathcal{M}(M,\Sing,\kappa)=\mathcal{Q}(M,\Sing,\kappa)/\Gamma(M,\Sing)$.

The group $GL(2,\R)$ acts naturally on
$\mathcal{Q}(M,\Sing,\kappa)$ and $\mathcal{M}(M,\Sing,\kappa)$, by postcomposition with the charts defined by local primitives of
the holomorphic $1$-form.  The Abelian differential
obtained acting by $g \in GL(2,\R)$ on
$\omega$ will be denoted by   $g\cdot \omega$. The {\em Teichm\"uller flow} $(G_t)_{t\in\R}$ is the
restriction of this action to the diagonal subgroup
$(\operatorname{diag}(e^t,e^{-t}))_{t\in\R}$ of $GL(2,\R)$ on
$\mathcal{Q}(M,\Sing,\kappa)$ and $\mathcal{M}(M,\Sing,\kappa)$.

\subsection{Ergodicity for $\Z$-periodic surfaces}
In this section we formulate a
result from  \cite{Hu-We} which provides an effective method to prove the ergodicity of translation flows
on recurrent $\Z$-covers of compact translation surfaces and will be exploited   to prove our main results.

Let $(M,\omega)$
be a compact connected translation surface. Let
$(\widetilde{M}_\gamma,\widetilde{\omega}_\gamma)$ be one of its
recurrent $\Z$-covers. Suppose that $C\subset M$ is a cylinder.
Let $\delta(C)\in H_1(M,\Z)$ be the homology class of any core curve of the cylinder $C$.
We will use the following notation (introduced in \cite{Hu-We}):
\[k(C):=\langle \delta(C),\gamma\rangle\in\Z,\quad
v(C):=\operatorname{hol}(\delta(C))\in\R^2,\quad
A(C)>0\ \text{ is the area of }\ C.\]
Note that if $k(C)\neq 0$ then the lift $\widetilde{C}_\gamma\subset \widetilde{M}_\gamma$ of $C$ to
the $\Z$-cover $\widetilde{M}_\gamma$ is an infinite strip.
\begin{definition}[see \cite{Hu-We}]
A direction $\theta\in S^1$ is \emph{well approximated} by strips
of the surface $(\widetilde{M}_\gamma,\widetilde{\omega}_\gamma)$
if there exist  $\vep$, $c>0$, $k_\theta\in\Z\setminus\{0\}$ and
infinitely many strips $\widetilde{C}\subset \widetilde{M}_\gamma$
for which
\begin{equation}\label{eq:defwast}
k(C)=k_\theta,\quad A(C)>c\quad\text{ and }\quad|\left( \cos \theta, \sin \theta\right )\wedge
v(C)|\leq (1-\vep)\frac{A(C)}{2\|v(C)\|}.
\end{equation}
\end{definition}

The following result follows directly from the proof of Theorem 1
in \cite{Hu-We} (more precisely from Claim 12).

\begin{theorem}\label{thm:Hu-We}
Suppose that $\theta\in S^1$ is an ergodic direction for the
translation flow on $(M,\omega)$. If $\theta\in S^1$ is  well
approximated  by strips of the surface
$(\widetilde{M}_\gamma,\widetilde{\omega}_\gamma)$ with
$k_\theta=\pm1$ then the flow
$(\widetilde{\phi}^\theta_t)_{t\in\R}$ on
$(\widetilde{M}_\gamma,\widetilde{\omega}_\gamma)$ is ergodic.
\end{theorem}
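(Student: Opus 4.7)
The natural approach is the essential-values (Lebesgue density) method for ergodicity of $\Z$-extensions. Let $f\in L^2(\widetilde M_\gamma)$ be invariant under $(\widetilde\phi^\theta_t)_{t\in\R}$; I would aim to show that $f$ is invariant under the generator $T_\gamma$ of the deck group. Once this is achieved, $f$ descends to an $L^2$-function on $M$ that is invariant under the ergodic base flow $(\phi^\theta_t)_{t\in\R}$, and is therefore almost everywhere constant.

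Fix a well-approximating cylinder $C\subset M$ with $k(C)=\pm 1$. Its lift $\widetilde C\subset\widetilde M_\gamma$ is an infinite strip; choose coordinates on $\widetilde C$ in which $v(C)$ is horizontal, so that $\widetilde C\cong\R\times(0,h(C))$ with $h(C)=A(C)/\|v(C)\|$, and $T_\gamma$ acts by horizontal translation of length $\|v(C)\|$. Letting $\alpha$ be the (signed) angle between $\theta$ and $v(C)$, condition \eqref{eq:defwast} rearranges to $\|v(C)\||\sin\alpha|\leq (1-\vep)h(C)/2$, which says the perpendicular drift of a $\theta$-orbit over one horizontal period of $\widetilde C$ is at most $(1-\vep)h(C)/2$. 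An elementary geometric computation then shows that a subset $E_C\subset\widetilde C$ of area at least a positive constant multiple of $A(C)$ consists of points $x$ whose $\theta$-orbit completes a full horizontal transit of $\widetilde C$ before leaving through the top or bottom edge. For each such $x$, flowing for the corresponding time $T_x\approx\|v(C)\|/\cos\alpha$ lands at $T_\gamma^{\pm 1}(x')$, where $x'$ differs from $x$ only by a vertical displacement of length $\|v(C)\||\tan\alpha|$. Flow invariance of $f$ then yields the local identity $f(x)=f\bigl(T_\gamma^{\pm 1}(x')\bigr)$ on $E_C$.

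The infinite family of such cylinders, together with $A(C)\geq c$, allows extraction of a subsequence along which $|\sin\alpha|\to 0$ while each $E_C$ retains uniformly positive area. A Borel--Cantelli-type argument then produces a positive-measure set of points lying in infinitely many of the $E_C$, and passing to the limit at an approximate continuity point of $f$ yields $f(x)=f(T_\gamma^{\pm 1}x)$. Flow invariance propagates this to almost every point, giving the desired $T_\gamma$-invariance. The main obstacle, I expect, is this last density step: the strips $\widetilde C$ may live in very different regions of $\widetilde M_\gamma$ as $C$ varies, and one must also track the sign $\pm$ in order to conclude genuine $T_\gamma$-invariance rather than a fluctuating $T_\gamma^{\pm 1}$-invariance. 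Handling both issues uniformly requires careful quantitative bookkeeping together with a compactness/equidistribution input, which is precisely the content of Claim~12 in \cite{Hu-We}.
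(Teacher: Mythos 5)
The paper does not actually prove this statement: it is imported verbatim from \cite{Hu-We}, with the remark that it ``follows directly from the proof of Theorem 1 (more precisely from Claim 12)'', and no proof environment follows. So there is no in-paper argument to compare yours against. Your sketch does follow the same strategy as \cite{Hu-We}: reduce ergodicity of the $\Z$-cover to showing that every flow-invariant set is invariant under the deck transformation $T_\gamma$ (equivalently, that $k_\theta$ is an essential value of the $\Z$-valued cocycle), and produce this essential value from the set of points that make a full transit of a long, thin, nearly $\theta$-parallel cylinder. Your geometric core is right: writing $h(C)=A(C)/\|v(C)\|$ for the height, condition \eqref{eq:defwast} says the transverse drift of a $\theta$-orbit over one period of the cylinder is at most $(1-\vep)h(C)/2$, so a sub-band of relative measure at least $(1+\vep)/2$ transits, and $A(C)>c$ makes this a uniformly positive amount of measure; traversing the core once moves the lift by $T_\gamma^{k(C)}$ because $k(C)=\langle\delta(C),\gamma\rangle=\pm1$.

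The difficulty is that the step you yourself flag as ``the main obstacle'' is the entire content of the theorem, and you discharge it by citing Claim~12 of \cite{Hu-We} --- which is precisely the statement being proved, so as a proof the proposal is circular at the decisive point. What is missing is the quantitative limit argument: one must show that a positive-measure set of approximate continuity points of the invariant function lies in infinitely many of the transit sets $E_{C_n}$, with the auxiliary displacement $x\mapsto x'$ tending to zero (this uses that infinitely many distinct cylinders of area $>c$ on a finite-area surface force $\|v(C_n)\|\to\infty$, hence $h(C_n)\to 0$ and the drift $\to 0$), and then invoke a genuine essential-value criterion to pass from ``$f=f\circ T_\gamma^{k_\theta}$ on a positive-measure set'' to a.e.\ invariance. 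Two smaller corrections: a nonzero $T_\gamma$-invariant function on the infinite-measure cover is never in $L^2$, so you must work with indicator functions or $L^\infty$; and the sign ambiguity you worry about is a non-issue, both because the definition of well approximation fixes $k_\theta$ once for the whole family of strips and because the group of essential values is a subgroup of $\Z$, so having $-1$ as an essential value is as good as having $+1$. Finally, make explicit that the hypothesis that $\theta$ is ergodic for the base flow is used only in the last descent step to $(M,\omega)$.
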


In order to prove ergodicity of the translation flow, which is required to apply the above Theorem, the following result from \cite{Ma}, know as \emph{Masur's criterion}, will be helpful. %For a direction $\theta\in S^1$, let $\rho_\theta \in GL(2, \mathbb{R})$ be the rotation by an angle $\theta$, such  that
%the directional flow $(\phi^\theta_t)_{t\in\R}$ on$(M,\omega)$ is the vertical flow  on$(\rho_\theta \cdot M,\omega)$

\begin{theorem}[Masur's criterion \cite{Ma}]\label{thm:masur}
Let $(M,\omega)\in \mathcal{M}(\kappa)$ be a compact translation
surface. Let $g\in SL(2,\R)$ be  an element that maps the direction $\theta$ to the vertical direction.
%$g \cdot (\cos \theta, \sin \theta)=(0,1)$.
Suppose that there exists a bounded subset $B\subset
\mathcal{M}(\kappa)$ and a sequence $t_n\to+\infty$ such that $
G_{t_n}(g \cdot (M,\omega))\in B$ for all $n\in\N$. Then the
directional flow $(\phi^\theta_t)_{t\in\R}$  on $(M,\omega)$ is
uniquely ergodic.
\end{theorem}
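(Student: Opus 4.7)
I plan to prove the criterion by contradiction, combining Mumford's compactness criterion for strata of Abelian differentials with the classical Rauzy--Veech analysis of non-uniquely ergodic interval exchanges. Since the hypothesis is formulated for $g\cdot(M,\omega)$ and the dynamics of $(\phi^\theta_t)_{t\in\R}$ on $(M,\omega)$ is conjugate (via $g$) to the vertical flow on $g\cdot(M,\omega)$, I may replace $(M,\omega)$ by $g\cdot(M,\omega)$ and reduce to the case where $\theta$ is vertical and $g=I$. Then I assume toward a contradiction that the vertical flow on $(M,\omega)$ admits two distinct ergodic invariant probability measures $\mu_1\neq\mu_2$.

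\textbf{From non-unique ergodicity to almost-vertical cycles.} Fix a horizontal transversal $I$ to the vertical foliation and let $T\colon I\to I$ be the first-return interval exchange transformation (IET); then $\mu_1,\mu_2$ descend to two distinct ergodic invariant probability measures of $T$. Iterating Rauzy--Veech renormalization produces a sequence of integer matrices whose columns can be read, in the spirit of the Katok--Veech theorem, as relative homology classes $c_n\in H_1(M,\Sing;\Z)$ that ``carry'' the invariant measures of $T$. Since $T$ is not uniquely ergodic, this sequence of cycles cannot all align to a single direction in cohomology, and the standard quantitative analysis of the Rauzy--Veech cocycle yields cycles $c_n$ whose holonomies $v_n=\int_{c_n}\omega=(x_n,y_n)\in\R^2$ satisfy $|x_n|/|y_n|\to 0$ together with the sharper condition $|x_n|\cdot|y_n|\to 0$: geometrically, the $c_n$ are unions of saddle connections that are asymptotically vertical and whose horizontal widths decay faster than the inverse of their vertical heights.

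\textbf{Short saddle connections and Mumford.} For each such $n$ set $s_n:=\tfrac12\log(|y_n|/|x_n|)\to+\infty$, so that the holonomy of $c_n$ on the renormalized surface $G_{s_n}(M,\omega)$ has Euclidean norm $\sqrt{e^{2s_n}x_n^2+e^{-2s_n}y_n^2}=\sqrt{2|x_n||y_n|}\to 0$. Hence $c_n$ is represented on $G_{s_n}(M,\omega)$ by a chain of saddle connections whose total length tends to $0$, and in particular at least one constituent saddle connection of $c_n$ has length tending to $0$. Mumford's compactness criterion for $\mathcal{M}(\kappa)$---namely, a subset is precompact if and only if the systole is bounded below---then forces the sequence $G_{s_n}(M,\omega)$ to leave every compact subset of $\mathcal{M}(\kappa)$. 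Since $s_n\to+\infty$, this is incompatible with the hypothesis that $G_{t_n}(M,\omega)\in B$ for some bounded $B$ and some $t_n\to+\infty$: either one extracts directly a contradiction by comparing times, or one argues that the orbit cannot be both recurrent to $B$ and escaping to infinity along two interlacing subsequences because of the continuity of the Teichm\"uller flow.

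\textbf{Main obstacle.} The delicate step is the construction of the integer cycles $c_n$ in the second paragraph with the \emph{quantitative} decay $|x_n|\cdot|y_n|\to 0$, not merely the qualitative statement $|x_n|/|y_n|\to 0$. This is precisely the content of Veech's analysis of Rauzy--Veech induction (equivalently, of the Kontsevich--Zorich cocycle) in the non-uniquely ergodic regime, and it rests on the polyhedral structure of the cone of invariant measures of an IET together with the hyperbolic features of the renormalization. All the remaining ingredients---the dictionary between the vertical flow on $(M,\omega)$ and its Poincar\'e section $T$, the computation of holonomy under $G_t$, and Mumford's criterion---are comparatively soft.
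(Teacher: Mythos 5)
The paper does not prove this statement: it is quoted verbatim from Masur \cite{Ma} as background, so there is no internal proof to compare against. Judged on its own, your outline reproduces the right general shape of the known arguments (contraposition, Mumford's compactness criterion, degeneration of the flat metric along the Teichm\"uller orbit), and the reduction to $g=\mathrm{I}$ and the computation $\|G_s v\|=\sqrt{2|x||y|}$ at $s=\tfrac12\log(|y|/|x|)$ are fine. But the argument as written does not close.

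The fatal gap is in the last step. What you produce is a sequence of times $s_n\to+\infty$ (determined by the holonomies of your cycles $c_n$, hence unrelated to the given recurrence times $t_n$) at which $G_{s_n}(M,\omega)$ has a short saddle connection. That only shows the orbit is \emph{unbounded}, i.e.\ leaves every compact set along \emph{some} subsequence. This is not in contradiction with the hypothesis, which merely asserts recurrence to a bounded set along \emph{another} subsequence: an orbit can perfectly well escape to infinity along one sequence of times and return to a fixed compact set along an interlacing one, and in fact Masur--Veech-generic orbits do exactly that (they are dense in the stratum, hence both recurrent and unbounded). Your fallback suggestion that ``continuity of the Teichm\"uller flow'' forbids two interlacing subsequences with opposite behaviour is therefore false. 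The actual content of Masur's theorem is the strictly stronger statement that non-unique ergodicity forces \emph{divergence}: for every compact $K$ there is $T$ with $G_t(M,\omega)\notin K$ for all $t\geq T$. To get that from your scheme you would need, for \emph{every} sufficiently large $t$, a saddle connection that is short on $G_t(M,\omega)$ --- i.e.\ the intervals of time during which the successive cycles $c_n$ are short must eventually cover all of $[T,\infty)$. Nothing in your construction guarantees this overlap, and arranging it is precisely where the real work lies. Separately, the quantitative input $|x_n|\cdot|y_n|\to0$ in the non-uniquely ergodic regime is asserted rather than derived (the area of a Rauzy--Veech tower, which is exactly $\lambda^{(n)}_j h^{(n)}_j$, need not tend to $0$ for any fixed tower just because the IET carries two ergodic measures); you correctly flag this as the main obstacle, but together with the covering issue above it means the proposal is an outline of where a proof might live rather than a proof.
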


\section{Construction of ergodic directions}\label{sec:sl2zaction}
In this section we describe a procedure to construct strips on  the translation surface
$(\widetilde{M}_{\gamma_0},\widetilde{\omega}_{\gamma_0})$
(see the end of Section~\ref{billtotrans}) which will allow us to apply
Theorem~\ref{thm:Hu-We}. We will study the
behavior of the  $SL(2,\Z)$ orbit of $\gamma_0\in H_1(M,\Z)$ for
the $SL(2,\Z)$-action induced on $H_1(M,\Z)$. For simplicity, we
always assume that $a=1=2h$, then $\lambda=|L|/2=|R|/2\in(0,1/2)$.

Let $\T^2_0$ denote the set
\[\T^2_0:=[-1/2,1/2)\times[-1/2,1/2)\setminus\{(0,0),(-1/2,-1/2),(-1/2,0),(0,-1/2)\}.\]
For $z \in \mathbb{T}^2_0$, let $M(z) \in \mathcal{M}(1,1) $ be the translation surface drawn
in Figure~\ref{defemy}. We will distinguish between the singular points ({\Large$\bullet$} and
{\scriptsize$\blacksquare$} in Figure~\ref{defemy}) and    $z=(x,y)$ will denote the position of the singular point
{\scriptsize$\blacksquare$} while $-z$ will be  the position of the
singular point {\Large$\bullet$}.
\begin{figure}[h]
\includegraphics[width=0.4\textwidth]{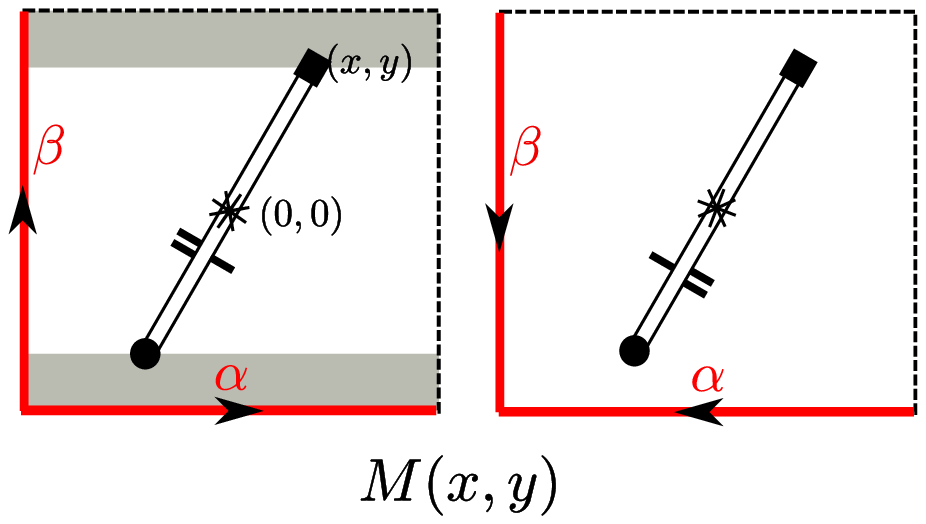}
\caption{}\label{defemy}
\end{figure}
Let $\mathscr{L}\subset\mathcal{M}(1,1)$ be the  locus   containing all the surfaces $M(z), z \in \mathbb{T}^2_0$. Notice that the group $\Trans(M(z))$ of automorphisms of the
translation surface fixing singular points consists of the identity map $id$ and of the map
$\tau:M(z)\to M(z)$ which exchanges the two squares (tori) in $M(z)$ by  translations.

For every $M(z)\in
\mathscr{L}$ let us fix the basis (called a standard basis)
$\{\alpha,\beta\}$ of the subgroup
\[H^{(0)}_1(M(z),\Z)=
\{\gamma\in H_1(M(z),\Z):\tau_*\gamma=-\gamma\}\] as in
Figure~\ref{defemy}. Then $(M,\omega)=M(0,\lambda)\in
\mathscr{L}$ and $\gamma_0=\beta$.
Notice that the locus $\mathscr{L}$ is $GL(2,\Z)$-invariant.

The group $GL(2,\R)$ acts naturally on $\mathbb{R}^2$ by matrix
multiplication. We will denote the image of $z=(x,y) \in
\mathbb{R}^2$ under $g \in GL(2,\R)$  by $g (x,y)$ or $g z $. This
action induces also a natural action  $GL(2,\Z) \subset GL(2,\R)$
on the torus $\T^2_0$. We will also denote by $g z\in \T^2_0$ the
image of $z\in \T^2_0$  by  the automorphism of $\mathbb{T}^2 _0 $
induced by $g$.
%the image of $z$ via the group automorphism $g:\T^2\to\T^2$  for which we will use the same notation.  For every $g\in GL(2,\Z)$,
%we will denote by $g \codt$ and $z\in \T^2_0$
% where $g z\in \T^2_0$ is
%the image of $z$ via the group automorphism $g:\T^2\to\T^2$
One can show that for every $g\in GL(2,\Z)$:
\begin{equation}\label{eq:fund}
g\cdot M(z)\text{ is identified with }M(gz)\text{ in
}\mathcal{M}(1,1).
\end{equation}
In order to verify (\ref{eq:fund}), it suffices to
check it on generators of $GL(2,\Z)$ and this is obtained as a byproduct of the proof of Lemma~\ref{lem:basic}. Since
$\Trans(M(z))=\{id,\tau\}$, there exist exactly two affine maps
$\zeta^g,\tau\circ\zeta^g: M(z)\to g\cdot M(z)$ whose derivatives are
equal to $g$.
% Moreover, $\tau\circ\zeta^g=\zeta^g\circ\tau$.
Thus, if we denote by $\zeta_*:H^{(0)}_1(M(z),\Z)\to H^{(0)}_1(
M(z'),\Z)$ the action on homology induced by a diffeomorphism $\zeta: M(z) \to M(z')$, we have $(\tau\circ\zeta^g)_*=-\zeta^g_*$. Therefore, the
induced homology action $g_*(z):H^{(0)}_1(M(z),\Z)\to
H^{(0)}_1(g\cdot M(z),\Z)$ is well defined up to $\pm$, i.e.
$g_*(z)=\pm\zeta^g_*$. Representing this action in standard basis we
obtain a matrix, denoted by $g_*(z)$, which is an element of
$PGL(2,\Z)$. Finally note that for all $g_1,g_2\in GL(2,\Z)$ we
have
\begin{equation}\label{eq:comphom}
(g_1\cdot g_2)_*(z)=(g_1)_*(g_2z)\cdot(g_2)_*(z).
\end{equation}

\subsection{Construction of strips and an ergodicity criterion}\label{rem:procstrips}

We now present the procedure of construction of infinite family of
infinite strips required by Theorem~\ref{thm:Hu-We}.

For any $z=(x,y)\in\T^2_0$ let us consider the cylinder
\[C_z=[-1/2,1/2)\times ([y,1/2)\cup[-1/2,-y])\quad\text{ in }\quad M(z),\]
which is shaded in Figure~\ref{defemy}. The holonomy vector of
the core $\delta(C_z)$ of $C_z$ is $v(C_z)=(1,0)$ and
$A(C_z)=1-2y$, moreover $\langle\delta(C_z),\beta\rangle=1$.

For any $g\in SL(2,\Z)$ let $z_g=(x_g,y_g):=g^{-1}(z)\in\T^2_0$ and
$\zeta^g:M(z_g)\to M(z)$ be an affine transformation whose
derivative is $g\in SL(2,\Z)$. Then $C_g:=\zeta^g(C_{z_g})\subset
M(z)$ is a cylinder such that
$\delta(C_g)=\zeta^g_*(\delta(C_{z_g}))\in H_1(M,\Z)$ and
\begin{equation*}
v(C_g)=v\big(\zeta^g_*(\delta(C_{z_g}))\big)=\operatorname{hol}\big(\zeta^g_*(\delta(C_{z_g}))\big)
=D\zeta^g\operatorname{hol}\big(\delta(C_{z_g})\big)=g\,(1,0),
\end{equation*}
\begin{equation*}
k(C_g)=\langle\zeta^g_*(\delta(C_{z_g})),\beta\rangle=\langle\delta(C_{z_g}),(\zeta^g_*)^{-1}\beta\rangle,
\end{equation*}
\begin{equation*}
A(C_g)=A(\zeta^g(C_{z_g}))=A(C_{z_g})=1-2y_g.
\end{equation*}
Suppose additionally that $g_*(z_g)\beta=\beta$. Then $\zeta^g_*\beta=\pm\beta$, hence
\[k(C_g)=\langle\delta(C_{z_g}),(\zeta^g_*)^{-1}\beta\rangle=\pm\langle\delta(C_{z_g}),\beta\rangle=\pm 1.\]
In summary, the above procedure allows to
construct strips on $\widetilde{M(z)}_\beta$ satisfying \eqref{eq:defwast} whenever
\begin{equation}\label{eq:invbeta}
g_*(g^{-1}(z))\beta=\beta.
\end{equation}
Combining this with Theorem~\ref{thm:masur} (Masur's criterion) and Theorem~\ref{thm:Hu-We}
(Hubert-Weiss criterion) yields the following criterion for the ergodicity
of directional flows on the surface $\widetilde{M(z)}_\beta$.

\begin{theorem}\label{theorem:ergodicity}
Suppose that $z_0\in\T^2_0$, $\alpha=[0;a_1,a_2,a_3,\ldots]$,
$0<a<b<1/2$ and there exists an increasing sequence of even
numbers $(k_n)_{n\geq 1}$ such that
\begin{equation}\label{eq:zalerg}
\big((h^{+})^{a_{1}}\cdots(h^{-})^{a_{k_n}}\big)^{-1}\!
M(z_0)=M(z_n),\
\big((h^{+})^{a_{1}}\cdots(h^{-})^{a_{k_n}}\big)^{-1}_*(z_0)\,
\beta=\beta,
\end{equation}
and setting $(x_n,y_n)=z_n$ we have
\[a\leq y_n\leq b\quad\text{ and }\quad
a_{k_n+1}\geq \frac{2}{1-2y_n}.\]
Then the
directional flow in direction $(1,\alpha)$ on
the $\mathbb{Z}$-cover $ \widetilde{M(z_0)}_\beta$ given by $\beta$  is ergodic.
\end{theorem}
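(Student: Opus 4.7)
The plan is to combine the strip-construction procedure of Section~\ref{rem:procstrips} with the two ergodicity criteria (Theorems~\ref{thm:masur} and~\ref{thm:Hu-We}). Set $G_n := (h^{+})^{a_{1}}\cdots(h^{-})^{a_{k_n}}\in SL(2,\Z)$, so that the hypotheses read $G_n^{-1}\cdot M(z_0)=M(z_n)$ together with $(G_n^{-1})_*(z_0)\beta=\beta$, plus the bounds $a\leq y_n\leq b$ and $a_{k_n+1}\geq 2/(1-2y_n)$. I would use the sequence $\{G_n\}$ first to produce strips on $\widetilde{M(z_0)}_\beta$ satisfying the Hubert--Weiss well-approximation condition \eqref{eq:defwast} with $k_\theta=\pm 1$, and second to verify Masur's criterion on the compact base $M(z_0)$; Theorem~\ref{thm:Hu-We} then transfers the ergodicity to the $\Z$-cover.

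To apply the procedure of Section~\ref{rem:procstrips} with $g=G_n$ and $z=z_0$, I first check the invariance condition \eqref{eq:invbeta}, namely $(G_n)_*(z_n)\beta=\beta$ in $PGL(2,\Z)$. Applying the cocycle identity \eqref{eq:comphom} to the factorization $\id=G_n\cdot G_n^{-1}$ gives $(G_n)_*(z_n)\cdot(G_n^{-1})_*(z_0)=\pm\id$, so $(G_n)_*(z_n)=\pm\bigl((G_n^{-1})_*(z_0)\bigr)^{-1}$; the hypothesis $(G_n^{-1})_*(z_0)\beta=\beta$ then yields $(G_n)_*(z_n)\beta=\pm\beta=\beta$ in $PGL$. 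The procedure hence produces a cylinder $C_n:=C_{G_n}\subset M(z_0)$ with
\[
k(C_n)=\pm 1,\qquad A(C_n)=1-2y_n,\qquad v(C_n)=G_n\,(1,0),
\]
so $A(C_n)\geq 1-2b=:c>0$ and the first two requirements of \eqref{eq:defwast} are satisfied with $k_\theta=\pm 1$.

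The heart of the argument is the angular estimate in \eqref{eq:defwast} for the direction $\theta=(1,\alpha)$. The matrices $h^{\pm}$ are the standard continued-fraction generators of $SL(2,\Z)$, so a direct induction shows that the two columns of $G_n$ are consecutive convergents of $\alpha=[0;a_1,a_2,\ldots]$; in particular $v(C_n)$ is proportional to $(q_{k_n},p_{k_n})$. The classical convergent bounds $|p_{k_n}-\alpha\,q_{k_n}|< 1/q_{k_n+1}$ and $q_{k_n+1}/q_{k_n}> a_{k_n+1}$, combined with $\|v(C_n)\|\to q_{k_n}\sqrt{1+\alpha^2}$ asymptotically, give an estimate of the form
\[
|(\cos\theta,\sin\theta)\wedge v(C_n)|\leq \frac{1+o(1)}{a_{k_n+1}\,\|v(C_n)\|}\cdot\frac{1}{\sqrt{1+\alpha^2}}.
\]
The hypothesis $a_{k_n+1}\geq 2/(1-2y_n)=2/A(C_n)$ together with the uniform bounds $a\leq y_n\leq b<1/2$ then upgrades this to $|(\cos\theta,\sin\theta)\wedge v(C_n)|\leq (1-\vep)A(C_n)/(2\|v(C_n)\|)$ for some $\vep>0$ independent of $n$ (where the strict inequalities in the two convergent bounds and the strict inequality $b<1/2$ provide the slack absorbing the $o(1)$). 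Thus $\{C_n\}$ well-approximates $\theta$.

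The remaining input for Theorem~\ref{thm:Hu-We} is ergodicity of $(\phi^\theta_t)_{t\in\R}$ on the compact base $M(z_0)$, which I would obtain from Masur's criterion (Theorem~\ref{thm:masur}). The iteration by $G_n^{-1}$ corresponds, via the standard dictionary between continued fractions and the Teichm\"uller flow, to a sequence of times $t_n\to+\infty$ for which $G_{t_n}(g\cdot M(z_0))$ is identified in $\mathcal{M}(1,1)$ with $M(z_n)$, where $g\in SL(2,\R)$ sends the direction $(1,\alpha)$ to the vertical. Since $a\leq y_n\leq b$ with $a>0$ and $b<1/2$, the points $z_n$ stay bounded away from the four excluded points of $\T^2_0$, so $\{M(z_n)\}$ lies in a compact subset of $\mathscr{L}\subset\mathcal{M}(1,1)$, and Masur's criterion yields unique ergodicity of the base flow; Theorem~\ref{thm:Hu-We} then lifts this to $\widetilde{M(z_0)}_\beta$. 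The main technical obstacle is formalising the Teichm\"uller-flow dictionary, pinning down the rescalings that relate $G_n^{-1}\cdot M(z_0)$ to $G_{t_n}(g\cdot M(z_0))$ and verifying compactness in the ambient stratum rather than only in the $z$-coordinate; by contrast, the continued-fraction estimate and the sign-juggling in $PGL(2,\Z)$ are routine.
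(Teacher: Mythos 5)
Your proposal follows essentially the same route as the paper's proof: the same cylinders $C_n=\zeta_n(C_{z_n})$ with $k(C_n)=\pm1$, $v(C_n)=(q_{k_n},p_{k_n})$ and $A(C_n)=1-2y_n$, the same continued-fraction estimates yielding the well-approximation condition \eqref{eq:defwast} (the paper gets $\vep=1/2$ directly from $|q_{k_n}\alpha-p_{k_n}|<1/(a_{k_n+1}q_{k_n})$ rather than via an $o(1)$ argument), and Masur's criterion for ergodicity on the compact base, all fed into Theorem~\ref{thm:Hu-We}. The one step you defer as the ``main technical obstacle'' is carried out in the paper by taking $\sigma=\left(\begin{smallmatrix}\alpha&-1\\0&1/\alpha\end{smallmatrix}\right)$, writing $G_{\log q_{k_n}}\cdot\sigma\cdot M(z_0)=\sigma_n\cdot M(z_n)$ with $\sigma_n=\operatorname{diag}(q_{k_n},1/q_{k_n})\cdot\sigma\cdot G_n$ (so the Teichm\"uller orbit point is $\sigma_n\cdot M(z_n)$, not $M(z_n)$ itself), and checking explicitly --- using that $k_n$ is even, so $p_{k_n}/q_{k_n}<\alpha$ --- that all entries of $\sigma_n$ lie in $[-1,1]$, whence the orbit stays in the compact set $G_0\cdot\{M(x,y):y\in[a,b]\}$.
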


\begin{proof}
First we will show that the  directional flow in direction
$(1,\alpha)$ on the surface $M(z_0)$ is ergodic. Let
$(p_n/q_n)_{n\geq 0}$ stand for the sequence of convergents of the
continued fraction of $\alpha$. Then
\[(h^{+})^{a_{1}}\cdots(h^{-})^{a_{k_n}}=\begin{pmatrix}
q_{k_n}&q_{k_n-1}\\p_{k_n}&p_{k_n-1}
\end{pmatrix}\]
Setting
\[\sigma:=
\begin{pmatrix}\alpha&-1\\0&1/\alpha
\end{pmatrix},
\]
it is clear that $\sigma$ maps the direction of the vector $(1,\alpha)$ to the vertical direction. We will prove that the sequence
\[\big(G_{\log q_{k_n}}\cdot \sigma \cdot M(z_0)\big)_{n\geq 1}\]
is bounded in the moduli space. Indeed, by assumption,
\[G_{\log q_{k_n}}\cdot \sigma \cdot M(z_0)=\sigma_n \cdot M(z_n),\]
where
\begin{align*}
\sigma_n:&=\operatorname{diag}(q_{k_n},1/q_{k_n})\cdot
\sigma\cdot(h^{+})^{a_{1}}\cdots(h^{-})^{a_{k_n}}\\&=
\begin{pmatrix}q_{k_n}&0\\0&1/q_{k_n}
\end{pmatrix}
\begin{pmatrix}\alpha&-1\\0&1/\alpha
\end{pmatrix}
\begin{pmatrix}
q_{k_n}&q_{k_n-1}\\p_{k_n}&p_{k_n-1}
\end{pmatrix}\\&=\begin{pmatrix}
q_{k_n}(q_{k_n}\alpha-p_{k_n})&q_{k_n}(q_{k_n-1}\alpha-p_{k_n-1})\\p_{k_n}/(\alpha
q_{k_n})&p_{k_n-1}/(\alpha q_{k_n})
\end{pmatrix}.
\end{align*}
Moreover,
\[ |q_{k_n}(q_{k_n}\alpha-p_{k_n})|<|q_{k_n}(q_{k_n-1}\alpha-p_{k_n-1})|<1\]
and, since $k_n$ is by construction even, so that $p_{k_n}/q_{k_n}< \alpha$, we have
\[0<\frac{p_{k_n-1}}{\alpha q_{k_n}}<\frac{p_{k_n}}{\alpha q_{k_n}}<1.\]
Therefore, all $\sigma_n$ belong to the subset $G_0\subset
SL(2,\R)$ of matrices whose coefficients belong to $[-1,1]$. Of
course, the set $G_0$ is compact. Let us consider the set
\[B_0:=\big\{M(x,y)\in\mathscr{L}:y\in[a,b]\big\},\]
which is compact in the moduli space $\mathcal{M}(1,1)$. Thus $G_0\cdot B_0\subset\mathcal{M}(1,1)$ is  a
compact subset in the moduli space $\mathcal{M}(1,1)$ as well.
Since $\sigma_n\cdot M(z_n)\in G_0\cdot B_0$ for every natural $n$, in view of
Theorem~\ref{thm:masur}, the  directional flow in direction
$(1,\alpha)$ on the surface $M(z_0)$ is ergodic.

In the rest of the proof, Theorem~\ref{thm:Hu-We} combined with the construction described before
Theorem~\ref{theorem:ergodicity} will give the ergodicity of
directional flow in direction
$(1,\alpha)$ on the $\Z$-cover $\widetilde{M(z_0)}_\beta$.

Recall that for any $z=(x,y)\in\T^2_0$  we denote by $C_z$ the cylinder
$[-1/2,1/2)\times ([y,1/2)\cup[-1/2,-y])\subset M(z)$.
The holonomy vector of the core $\delta(C_z)$ of
$C_z$ is $v(C_z)=(1,0)$, $\operatorname{Area}(C_z)=1-2y$ and
$\langle\delta(C_z),\beta\rangle=1$.

By assumption, for every  $n\geq 1$ there exists an affine
transformation
\[\zeta_n:M(z_n)\to M(z_0)\]
whose derivative $D\zeta_n$ is $(h^{+})^{a_{1}}\cdots(h^{-})^{a_{k_n}}=\begin{pmatrix}
q_{k_n}&q_{k_n-1}\\p_{k_n}&p_{k_n-1}
\end{pmatrix}$ and
$(\zeta_n)_*\beta=\beta$. Let us consider the cylinder
$C_n:=\zeta_n(C_{z_n})\subset M(z_0)$ for which the homology
class of the core is $\delta(C_n)=(\zeta_n)_*(\delta(C_{z_n}))$. Then
\[k(C_n)=\langle\delta(C_n),\beta\rangle=\langle(\zeta_n)_*(\delta(C_{z_n})),(\zeta_n)_*(\beta)\rangle=
\langle\delta(C_{z_n}),\beta\rangle=1,\]
\[v(C_n)=(D\zeta_n) v(C_{z_n})=(D\zeta_n)(1,0)=(q_{k_n},p_{k_n}),\]
\[A(C_n)=A(C_{z_n})=1-2y_n.\]
Therefore each cylinder $C_n\subset M(z_0)$ is lifted to an
infinite strip $\widetilde{C}_n\subset \widetilde{M(z_0)}_\beta$.
Using this sequence of strips we can show that the direction
$(1,\alpha)$ is well approximated by strips. Indeed,
\[\frac{|(1,\alpha)\wedge v(C_n)|}{\|(1,\alpha)\|}=
\frac{|(1,\alpha)\wedge (q_{k_n},p_{k_n})|}{\|(1,\alpha)\|}=
\frac{|q_{k_n}\alpha-p_{k_n}|}{\|(1,\alpha)\|}<
\frac{1}{\|(1,\alpha)\|}\frac{1}{a_{k_n+1}q_{k_n}}\] and
\[\frac{A(C_n)}{2\|v(C_n)\|}\geq \frac{1-2y_n}{2q_{k_n}\|(1,\alpha)\|}.\]
Therefore,
\[\frac{|(1,\alpha)\wedge v(C_n)|}{\|(1,\alpha)\|}\leq \frac{1}{2}\frac{A(C_n)}{2\|v(C_n)\|}\]
whenever $a_{k_n+1}\geq 2/(1-2y_n)$.

The ergodicity of the directional flow in direction
$(1,\alpha)$ on the surface $\widetilde{M(z_0)}_\beta$ hence
follows directly from Theorem~\ref{thm:Hu-We}.
\end{proof}

\subsection{$SL_+(2,\Z)$-action induced on homologies}
Denote by $SL_+(2,\Z)$ the semi-group of non-negative matrices
in $SL(2,\R)$. In this section we establishes some principal rules of the
$SL_+(2,\Z)$-action induced on homologies. These rules will help us to find elements
$g\in SL_+(2,\Z)$ satisfying \eqref{eq:invbeta}, first
for irrational $\lambda$ in Section~\ref{section:irrational} and then for
rational $\lambda$ in Section~\ref{section:rational}.

Set
\[h^+:=\begin{pmatrix}1&1\\0&1\end{pmatrix},\quad h^-:=\begin{pmatrix}1&0\\1&1\end{pmatrix},\quad \omega:=\begin{pmatrix}0&1\\-1&0\end{pmatrix},\quad \vartheta:=\begin{pmatrix}0&1\\1&0\end{pmatrix}.\]
Then $SL_+(2,\Z)$ is generated by $h^+$ and $h^-$ and one can check that:
\begin{equation}\label{eq:matrixrel}
\begin{split}
& \vartheta \cdot h^{\pm} \cdot\vartheta^{-1}=h^{\mp}, \qquad \vartheta \cdot \omega^{\pm1} \cdot \vartheta^{-1}=\omega^{\mp1}, \qquad   \omega
\cdot h^{\pm}\cdot\omega^{-1}=(h^{\mp})^{-1},\\%\quad
& h^-\cdot(h^+)^{-1} \cdot h^- =\omega^{-1},
\qquad  h^+\cdot(h^-)^{-1} \cdot h^+ =\omega.
\end{split}
\end{equation}
Let us consider two involutions $-id$ and $\vartheta$ in
$GL(2,\Z)$. They generate two involutions (denoted also by $-id$
and $\vartheta$) acting on the locus $\mathscr{L}$. Geometrically,
\begin{itemize}
\item $-id$ rotates the squares of $M(z)$ by angle $\pi$, therefore
its induced action on $H^{(0)}_1(M,\Z)$ maps $\alpha$ to $-\alpha$ and
$\beta$ to $-\beta$ except when $z$ belongs to the boundary of $\mathbb{T}^2_0$;
\item  $\vartheta$ reflects the squares of $M(z)$ across their
diagonal, therefore its action on $H^{(0)}_1(M,\Z)$ exchanges the
basis elements $\alpha$ and $\beta$.
\end{itemize}
These two involutions (symmetries) will help us to describe the
action of $SL_+(2,\Z)$ on the homology level. For this purpose, we
will use the following two lemmas.
\begin{lemma}\label{cor:symetry}
Set $E:=\{(x,y)\in \T_0^2:x,y>-1/2\}$.
For every $z\in\T^2_0$ we have
\begin{equation}\label{eq:firstclaim}
\vartheta_*(z)=\vartheta\quad\text{ and } \quad(-id)_*(z)= id
\quad\text{whenever}\quad z\in E.
\end{equation}
Suppose that $g\cdot M(z)=M(z')$ for some $g\in SL(2,\Z)$. Then
\begin{align}\label{eq:vartheta}
(\vartheta \cdot g \cdot\vartheta^{-1}) \cdot M(\vartheta
z)=M(\vartheta z')\quad \text{and}\quad(\vartheta \cdot g \cdot
\vartheta^{-1})_*(\vartheta z)=\vartheta \cdot
g_*(z)\cdot\vartheta^{-1}.
\end{align}
If additionally $z,z'\in E$ then
\begin{align}\label{eq:minusidtrans}
g\cdot M(-z)=M(-z')\quad\text{ and }\quad g_*(-z)=g_*(z).
\end{align}
\end{lemma}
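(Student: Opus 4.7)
The plan is to verify \eqref{eq:firstclaim} first, then derive \eqref{eq:vartheta} and \eqref{eq:minusidtrans} from it by functoriality.

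For \eqref{eq:firstclaim}, I would realize $\vartheta:M(z)\to M(\vartheta z)$ as the natural affine map that reflects each square of $M(z)$ across its diagonal. Its derivative is the matrix $\vartheta$, and geometrically it interchanges the two generators of the standard basis of $H_1^{(0)}(M(z),\Z)$, so its matrix in standard bases is $\vartheta$ for every $z\in\T^2_0$. For $-id$, the candidate is rotation by $\pi$ of each square, combined with the relabelling that swaps the two singular points. The assumption $z\in E$ guarantees that neither coordinate of $z$ equals $-1/2$, so this rotation really is an affine isomorphism $M(z)\to M(-z)$ of labelled translation surfaces. Its derivative is $-id$, and its action on the standard basis is multiplication by $-1$, which is $id$ in $PGL(2,\Z)$.

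For \eqref{eq:vartheta}, starting from an affine map $\zeta^g:M(z)\to M(z')$ of derivative $g$ (which exists by assumption), I would form the composition
\[\eta:=\vartheta\circ\zeta^g\circ\vartheta:M(\vartheta z)\longrightarrow M(z)\longrightarrow M(z')\longrightarrow M(\vartheta z'),\]
using that $\vartheta^2=id$. Its derivative is $\vartheta\cdot g\cdot\vartheta^{-1}$, which yields the first equation. Applying \eqref{eq:firstclaim} and the cocycle identity \eqref{eq:comphom} to $\eta_*$ then gives the conjugation formula for $(\vartheta g\vartheta^{-1})_*(\vartheta z)$.

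Claim \eqref{eq:minusidtrans} is handled analogously, via
\[\eta:=(-id)\circ\zeta^g\circ(-id):M(-z)\longrightarrow M(z)\longrightarrow M(z')\longrightarrow M(-z').\]
The hypothesis $z,z'\in E$ is exactly what is needed so that both outer factors fall under the second half of \eqref{eq:firstclaim}. The derivative of $\eta$ is $g$ itself (since $-id$ is central in $GL(2,\R)$), and combining \eqref{eq:firstclaim} with \eqref{eq:comphom} gives $g_*(-z)=id\cdot g_*(z)\cdot id=g_*(z)$ in $PGL(2,\Z)$.

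The main obstacle will be the $-id$ half of \eqref{eq:firstclaim}: one must verify that rotation by $\pi$ on each square does extend to a bona fide translation-surface isomorphism preserving the distinguished labels of the two singular points $\bullet$ and $\blacksquare$. This identification breaks down precisely when $z$ lies in one of the boundary strata $x=-1/2$ or $y=-1/2$ excluded from $E$, which is the reason for the hypothesis. Once \eqref{eq:firstclaim} is secured, parts \eqref{eq:vartheta} and \eqref{eq:minusidtrans} reduce to the chain rule on derivatives and the functoriality of the induced action on $H^{(0)}_1$.
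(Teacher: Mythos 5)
Your proposal is correct and takes essentially the same route as the paper: the geometric identification of $\vartheta_*$ and $(-id)_*$ (with $z\in E$ needed only for the latter) followed by the cocycle identity \eqref{eq:comphom}, which is exactly the functoriality of composed affine maps that you invoke. The only cosmetic difference is that the paper proves \eqref{eq:minusidtrans} by commuting $-id$ past $g$ inside \eqref{eq:comphom} rather than by conjugating $\zeta^g$ by two copies of $-id$, which amounts to the same computation.
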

\begin{proof}
The first claim \eqref{eq:firstclaim} follows from the
observations formulated before the lemma. The equality $(\vartheta
\cdot g \cdot\vartheta^{-1}) \cdot M(\vartheta z)=M(\vartheta z')$
follows directly from the fact that $z'=gz$ and \eqref{eq:fund}.
In view of \eqref{eq:comphom},
\[(\vartheta \cdot g \cdot
\vartheta^{-1})_*(\vartheta z)=\vartheta_*(gz)\cdot g_*(z)\cdot
\vartheta^{-1}_*(\vartheta z).\] Since $\vartheta^{-1}=\vartheta$,
from \eqref{eq:firstclaim} we obtain \eqref{eq:vartheta}.

The equality $ g \cdot M((-id)z)=M((-id)z')$ also follows directly
from  \eqref{eq:fund} and the fact that $-id$ and $g$ commute.
Moreover, in view of \eqref{eq:comphom} and \eqref{eq:firstclaim}
applied to $z,z'\in E$,
\begin{align*}
g_*(z)&=(-id)_*(z')\cdot g_*(z)=((-id)\cdot g)_*(z)=(g\cdot
(-id))_*(z)\\&=g_*(-z)\cdot (-id)_*(z)=g_*(-z).
\end{align*}
\end{proof}

\begin{figure}[h]
\includegraphics[width=0.9\textwidth]{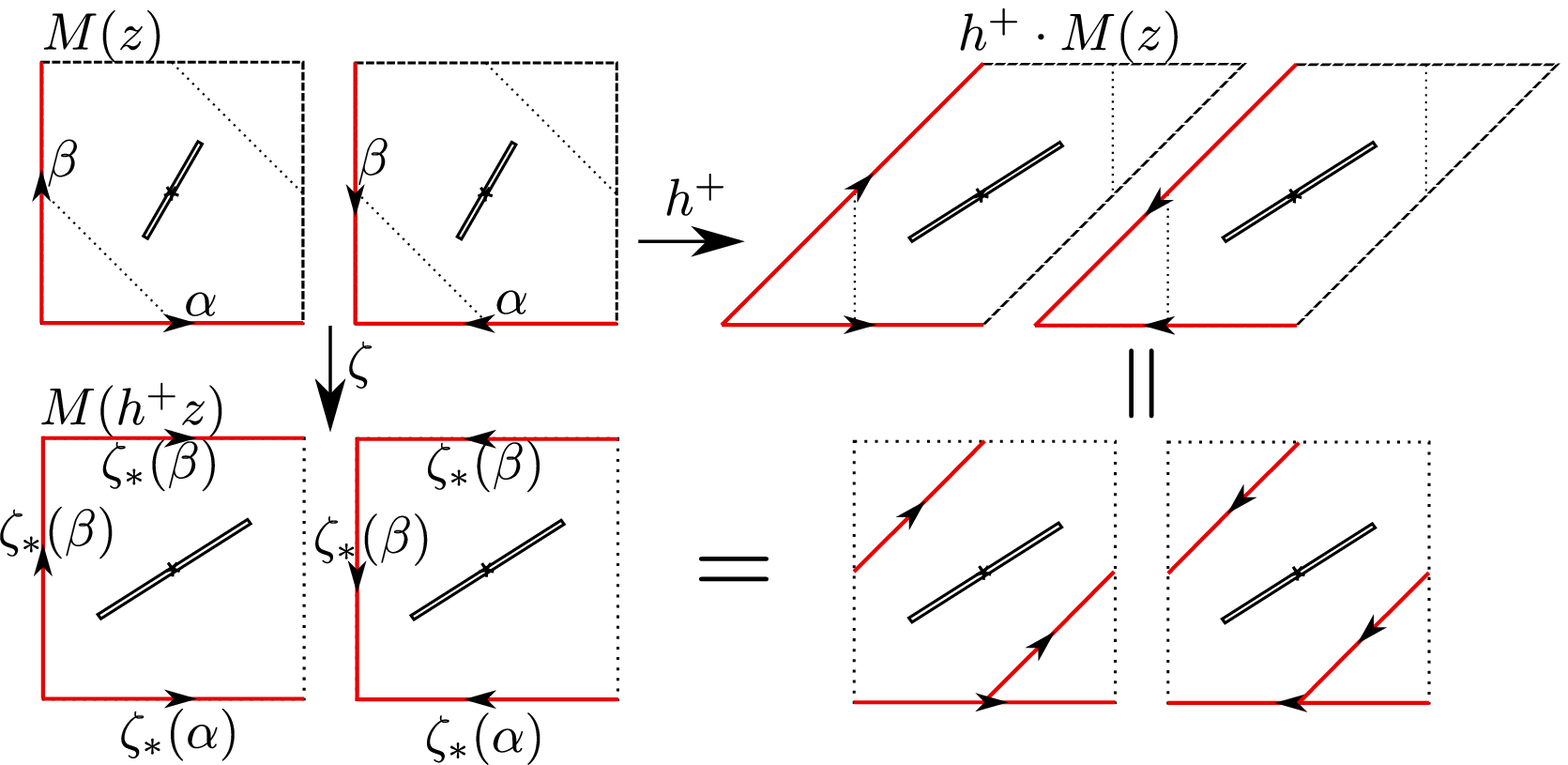}
\caption{The action of $h^+$ on $M(z)$ if $z\in S$}\label{homreg}
\end{figure}
\begin{figure}[h]
\includegraphics[width=0.9\textwidth]{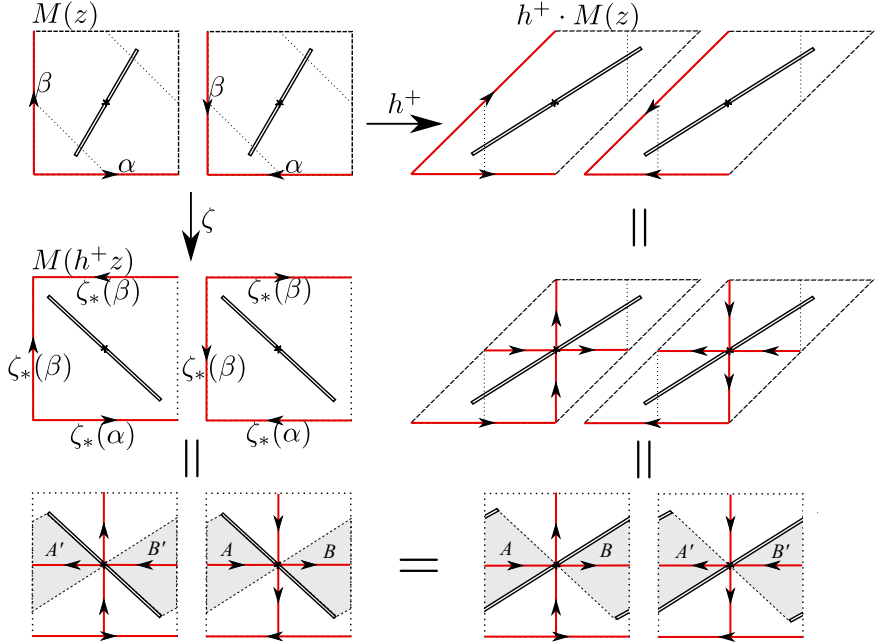}
\caption{The action of $h^+$ on $M(z)$ if $z\notin
S$}\label{homnonreg}
\end{figure}

\begin{lemma}\label{lem:basic}
Set
$S:=\{(x,y)\in \T^2_0:-1/2\leq x+y< 1/2\}$.
For every $z\in\T^2_0$ we have
\[h^{\pm}_*(z)=\left\{\begin{matrix}h^\pm & \text{ if } & z\in S,\\
(h^\pm)^{-1} & \text{ if } & z\notin S.
\end{matrix}\right.\]
\end{lemma}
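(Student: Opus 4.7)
The plan is to treat the case $g = h^+$ directly by a cut-and-paste argument, and then to deduce the $h^-$ case from it via the conjugation identity $h^- = \vartheta h^+ \vartheta^{-1}$ from \eqref{eq:matrixrel} together with Lemma~\ref{cor:symetry}.

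For $g = h^+$, I would first apply the shear $h^+$ to $M(z)$. Each of the two unit squares making up $M(z)$ becomes a parallelogram of the same area, while the singular points move from $\pm z = \pm(x,y)$ to $\pm(x+y, y)$. To identify $h^+ \cdot M(z)$ with an element $M(z')$ of $\mathscr{L}$, I would cut each parallelogram along a suitable vertical segment and translate one of the resulting triangular pieces horizontally by $\pm 1$, reassembling a pair of standard unit squares. This construction simultaneously verifies \eqref{eq:fund} on the generator $h^+$ and exhibits an explicit affine map realizing $\zeta^{h^+}$, whose induced action on homology I will then read off in the standard basis.

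If $z \in S$, i.e.~$x+y \in [-1/2, 1/2)$, the sheared marked point $(x+y, y)$ already lies in the fundamental square $[-1/2, 1/2)^2$, so $z' = h^+ z$ and a single cut-and-translate of the type shown in Figure~\ref{homreg} recovers the standard model. Tracing a representative of the horizontal cycle $\alpha$ through this operation one finds $\alpha \mapsto \alpha$, while a representative of the vertical cycle $\beta$ becomes a slope-$1$ loop, which is homologous to $\alpha + \beta$ in the standard basis of $M(h^+ z)$. In the standard basis this reads as the matrix $h^+$. If $z \notin S$, so that $x+y \in [1/2, 1) \cup [-1, -1/2)$, the sheared marked point falls outside $[-1/2, 1/2)^2$ and a reduction by an integer horizontal translation is needed. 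The cut falls on the opposite side of the parallelogram and the triangular piece is translated in the opposite horizontal direction, giving the configuration of Figure~\ref{homnonreg}. Tracing cycles again, $\alpha$ still maps to $\alpha$, but the vertical cycle $\beta$ closes up through a translation in the opposite direction and hence represents $\beta - \alpha$ in the standard basis of the resulting surface, yielding the matrix $(h^+)^{-1}$.

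For $g = h^-$, I would use the identity $h^- = \vartheta h^+ \vartheta^{-1}$ from \eqref{eq:matrixrel}. Applying \eqref{eq:vartheta} with $g = h^+$ gives
\[
h^-_*(\vartheta z) = (\vartheta h^+ \vartheta^{-1})_*(\vartheta z) = \vartheta \cdot h^+_*(z) \cdot \vartheta^{-1},
\]
and since the defining inequality $-1/2 \leq x+y < 1/2$ of $S$ is symmetric in $x$ and $y$, the involution $\vartheta$ preserves both $S$ and its complement in $\T^2_0$. Combining this with the already established formula for $h^+_*$ yields $h^-_*(z) = h^-$ for $z \in S$ and $h^-_*(z) = (h^-)^{-1}$ for $z \notin S$.

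The main obstacle is the careful cycle-tracking in the case $z \notin S$: one must justify the sign of $\alpha$ appearing in the image of $\beta$ so that the resulting matrix is indeed the inverse $(h^+)^{-1}$ and not $h^+$ itself. This is a direct visual inspection of Figure~\ref{homnonreg} but is precisely the geometric source of the dichotomy in the statement.
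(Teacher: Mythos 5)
Your proposal is correct and follows essentially the same route as the paper: a direct shear--cut--paste identification of $h^+\cdot M(z)$ with $M(h^+z)$, reading off $\alpha\mapsto\alpha$ and $\beta\mapsto\alpha+\beta$ (resp.\ $\beta\mapsto\beta-\alpha$) according to whether $x+y$ stays in $[-1/2,1/2)$, followed by the $\vartheta$-conjugation argument for $h^-$ using the $\vartheta$-symmetry of $S$. The only point where the paper does slightly more work is the $z\notin S$ case, where after the integer translation one must additionally exchange two pairs of regions using the slit identifications to reach the canonical representative $M(h^+z)$ of the locus $\mathscr{L}$ — this is the ``visual inspection'' you correctly flag as the crux.
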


\begin{proof}
In Figures \ref{homreg} and \ref{homnonreg} we present the surface $h^+\cdot M(z)$ for $z\in S$ and $z\notin S$ respectively and using cutting and pasting
we show how to represent $h^+\cdot M(z)$ as $M(h^+z)$.  The cut and paste in Figure~\ref{homreg} is straightforward. Let us explain Figure~\ref{homnonreg}:
after the linear action of $h^+$ and a first cut and paste,
%the endpoint image of the slit is not given given by a point in $  \mathbb{T}^2_0$, thus
%we do a further cut and paste operation as follows.
we consider the shaded areas labeled by $A, A', B$ and $B'$  in Figure~\ref{homnonreg}. Recalling the gluings between slits, one can verify that $A$ and
$A'$ can be exchanged and similarly $B$ and $B'$. The surface that we obtain after this operation is one of the canonical representatives of the locus
$\mathscr{L}$, more precisely it is $M(z')$, where $z'= h^+ z$.

 Let us denote by  $\zeta=\zeta^{h^+}:M(z)\to M(h^+z)$ the affine map obtained combining the linear action of $h^+$ with the cut and paste operations. By construction, we have
$D\zeta^{h^+}=h^+$. In order to describe its induced action $\zeta^{h^+}_*$ on homology, in Figures   \ref{homreg} and \ref{homnonreg} we draw the images of the homology classes of $\alpha, \beta$ under $\zeta^{h^+}$. By changing representatives as shown in Figures \ref{homreg} and \ref{homnonreg}, one can verify that:
%, by changing homology representatives after cut and paste operations.
\[\zeta^{h^+}_*(\alpha)=\alpha,\ \zeta^{h^+}_*(\beta)=\alpha+\beta\text{ if }z\in
S,\quad \zeta^{h^+}_*(\alpha)=\alpha,\ \zeta^{h^+}_*(\beta)=\beta-\alpha\text{
if }z\notin S.\] Therefore, $(h^+)_*(z)=h^+$ if $z\in S$ and
$(h^+)_*(z)=(h^+)^{-1}$ if $z\notin S$.

In view of \eqref{eq:matrixrel} and \eqref{eq:vartheta}, for any
$z\in \T^2_0$
\[(h^-)_*(z)=(\vartheta \cdot h^+\cdot \vartheta^{-1})_*(z)=
\vartheta \cdot (h^+)_*(\vartheta^{-1} z)\cdot \vartheta^{-1}.\]
Since $S$ is symmetric with respect to the involution $\vartheta$, $z\in S$ if and only if $\vartheta^{-1} z=\vartheta z\in S$. By \eqref{eq:matrixrel} and \eqref{eq:vartheta}, it follows that $(h^-)_*(z)=h^-$ if $z\in
S$ and $(h^-)_*(z)=(h^-)^{-1}$ if $z\notin S$.
\end{proof}

In view of Lemma~\ref{lem:basic}, for every $z\in\T^2_0$ and
$n\in\Z$ there exists $m_n^\pm(z)\in\Z$ such that
\[\big((h^\pm)_*^{-n}(z)\big)^{-1}=(h^\pm)^n_*\big((h^\pm)^{-n}z\big)=(h^\pm)^{m_n^\pm(z)}.\]

\begin{lemma}\label{lem:mn}
If $z=(x,y)\in\T^2_0$ with irrational $x$ and $y$ then
\begin{itemize}
\item[(i)]  $m_n^\pm(z)\to+\infty$ as $n\to+\infty$;
\item[(ii)] the sequences $\big(m_n^\pm(z)\big)_{n\geq 1}$ take
all natural values.
\end{itemize}
\end{lemma}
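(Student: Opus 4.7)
The plan is to recognise $m_n^{\pm}(z)$ as a Birkhoff cocycle sum along the backward orbit of $z$ and then invoke equidistribution for an irrational rotation. Iterating \eqref{eq:comphom} gives
\[(h^{\pm})^n_{\ast}\bigl((h^{\pm})^{-n}z\bigr)=\prod_{k=1}^{n}(h^{\pm})_{\ast}\bigl((h^{\pm})^{-k}z\bigr),\]
and by Lemma~\ref{lem:basic} each factor is $h^{\pm}$ when $(h^{\pm})^{-k}z\in S$ and $(h^{\pm})^{-1}$ otherwise. Since all factors are powers of the same matrix they commute, which yields
\[m_n^{\pm}(z)=\sum_{k=1}^{n}\varepsilon_k^{\pm}(z),\qquad \varepsilon_k^{\pm}(z):=\ind_S\bigl((h^{\pm})^{-k}z\bigr)-\ind_{\T_0^2\setminus S}\bigl((h^{\pm})^{-k}z\bigr)\in\{-1,+1\}.\]

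For (i), the dynamical system underlying this sum is elementary. The map $(h^{+})^{-1}\colon(x,y)\mapsto(x-y,y)$ preserves each horizontal circle $\T^1\times\{y\}$ and acts there as a rotation by $-y$; since $y$ is irrational, this rotation is uniquely ergodic with respect to Lebesgue measure. A short case analysis (treating $y\geq 0$ and $y<0$ separately) shows that $|S\cap(\T^1\times\{y\})|=1-|y|$, so the Weyl equidistribution theorem applied to the indicator of the arc $S\cap(\T^1\times\{y\})$ gives
\[\frac{m_n^{+}(z)}{n}\longrightarrow 2(1-|y|)-1=1-2|y|>0\]
because $|y|<1/2$; hence $m_n^{+}(z)\to+\infty$. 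The $h^{-}$ case is symmetric: $(h^{-})^{-1}\colon(x,y)\mapsto(x,y-x)$ restricts to an irrational rotation on $\{x\}\times\T^1$, and by the $(x,y)\mapsto(y,x)$ symmetry of $S$ the analogous limit is $1-2|x|>0$.

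For (ii), the Birkhoff sum representation immediately yields $|m_n^{\pm}(z)-m_{n-1}^{\pm}(z)|=1$ for every $n\geq 2$, while $m_1^{\pm}(z)\in\{-1,+1\}$. Thus $(m_n^{\pm}(z))_{n\geq 1}$ is a nearest-neighbour walk on $\Z$ starting at $\pm 1$ and, by (i), diverging to $+\infty$. A discrete intermediate value argument shows that the walk attains every integer in the interval $[\min_{n\geq 1}m_n^{\pm}(z),+\infty)$; since this minimum is at most $m_1^{\pm}(z)\leq 1$, every positive integer occurs in its range.

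The essential step is the bookkeeping that identifies the matrix product appearing in $(h^{\pm})^n_{\ast}((h^{\pm})^{-n}z)$ with the Birkhoff cocycle of $\ind_S-\ind_{\T_0^2\setminus S}$ via Lemma~\ref{lem:basic}; after that, the proof reduces to standard facts about irrational circle rotations. The finitely many iterates which might land on the boundary of $S$ or on the four excluded points of $\T^2_0$ are irrelevant for both the asymptotic density computation and the unit-step property, since by irrationality of $x$ and $y$ each such event can occur for at most one value of $k$.
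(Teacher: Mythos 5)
Your proposal is correct and follows essentially the same route as the paper: both identify $m_n^{\pm}(z)$ via \eqref{eq:comphom} and Lemma~\ref{lem:basic} as the Birkhoff sum of $2\ind_S-1$ along the backward orbit, deduce (i) from unique ergodicity of the irrational rotation on the invariant circle through $z$ (the slice of $S$ having length $1-|y|$, resp. $1-|x|$), and obtain (ii) from the unit-step property of the sequence. The only cosmetic difference is that you spell out the $h^-$ case and the intermediate-value step, which the paper leaves as "similar" and "(i) implies (ii)".
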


\begin{proof}
We will give the proof for the matrix $h^+$. The case of the
matrix $h^-$ is similar. By Lemma~\ref{lem:basic}, for every
$z'\in\T^2_0$ we have
\[(h^+)_*(z')=(h^+)^{2\ind_{S}(z')-1},\]
where $\ind_{S}$ is the indicator function of the subset $S$. In
view of \eqref{eq:comphom}, it follows that
\begin{align*}
(h^+)^n_*\big((h^+)^{-n}z\big)&= h^+_*\big((h^+)^{-1}z\big)\cdot
h^+_*\big((h^+)^{-2}z\big)\cdots h^+_*\big((h^+)^{-n+1}z\big)\cdot
h^+_*\big((h^+)^{-n}z\big)\\
&=(h^+)^{\sum_{j=1}^n(2\ind_{S}(x-jy,y)-1)}.
\end{align*}
Therefore
\[m_n^+(z)=\sum_{j=1}^n(2\ind_{S}(x-jy,y)-1)=\sum_{j=1}^n(2\ind_{S_y}(x-jy)-1),\]
where $S_y:=\{x'\in\T:(x',y)\in S\}$ ($\T:=[-1/2,1/2)$). Since
$S_y$ is an interval of length $1-|y|$, the mean value of the
function $2\ind_{S_y}(\,\cdot\,)-1$ is $1-2|y|>0$ and, by unique
ergodicity of the rotation by $y$ on $\T$, we have
\[\frac{1}{n}\sum_{j=1}^n(2\ind_{S_y}(x-jy)-1)\to 1-2|y|>0\quad\text{as}\quad n\to+\infty.\]
It follows that
\[m_n^+(z)=\sum_{j=1}^n(2\ind_{S_y}(x-jy)-1)\to+\infty\quad\text{as}\quad n\to+\infty.\]
Since $m_0^+(z)=0$ and $m_{n+1}^+(z)=m_n^+(z)\pm 1$, the condition
(i) implies (ii).
\end{proof}

\section{Irrational $\lambda$}\label{section:irrational}
The aim of this section is to give the proof of the following
result on the existence  of ergodic directions on
$\widetilde{M(0,\lambda)}_\beta$ whenever $\lambda\in(0,1/2)$ is
irrational.
\begin{theorem}\label{theorem:irrational}
For  every irrational $\lambda\in(0,1/2)$ the set of ergodic
directions on the surface $\widetilde{M(0,\lambda)}_\beta$ is
uncountable.
\end{theorem}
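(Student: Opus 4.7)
The plan is to apply Theorem~\ref{theorem:ergodicity} at $z_0 = (0, \lambda)$ and to construct, by a Cantor-like inductive procedure, uncountably many continued fraction expansions $\alpha = [0; a_1, a_2, \ldots]$ together with an increasing sequence of even integers $(k_n)$ satisfying its hypotheses. Each such $\alpha$ will give an ergodic direction on $\widetilde{M(0,\lambda)}_\beta$, and distinct admissible $\alpha$'s will produce distinct directions.

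First I would fix a compact interval $[a, b] \subset (0, 1/2)$ containing $\lambda$, which will serve as a reservoir for the $y$-coordinates of the iterates. I would then build the sequence $(a_i)_{i \geq 1}$ block by block: inductively, assume $(a_1, \ldots, a_{k_n})$ has been chosen so that $z_n := ((h^+)^{a_1}\cdots(h^-)^{a_{k_n}})^{-1} z_0$ has irrational coordinates and $y_n \in [a, b]$, and that the cumulative induced homology map $((h^+)^{a_1}\cdots(h^-)^{a_{k_n}})^{-1}_*(z_0)$ fixes $\beta$. A key observation is that the stabilizer of $\beta$ in $PGL(2,\Z)$ is exactly the cyclic subgroup $\langle h^-\rangle$ (since $h^-\beta = \beta$ whereas $h^+\beta \neq \beta$); by Lemma~\ref{lem:basic}, the cumulative induced action is a word in $h^{\pm 1}$, so the requirement is that this word reduces, modulo $\pm I$, to a power of $h^-$.

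In the inductive step I would first pick $a_{k_n+1}$ to be any integer large enough to satisfy the area condition $a_{k_n+1} \geq 2/(1-2y_n)$; this forces the homology contribution $(h^+)^{-m^+_{a_{k_n+1}}(z_n)}$, which typically breaks the $\beta$-stabilising property. I would then append further digits $a_{k_n+2}, a_{k_n+3}, \ldots$ whose homology contributions (coming from alternating $h^-$- and $h^+$-factors) cancel this disruption modulo $\langle h^-\rangle$. Lemma~\ref{lem:mn}, which says that the sequences $m^\pm_n(z)$ take every natural value, provides the flexibility needed to fine-tune the block; in parallel, the equidistribution of the underlying rotation orbits (the same fact used in the proof of Lemma~\ref{lem:mn}) is exploited to bring $y_{n+1}$ back into $[a, b]$ with irrational coordinates, so the inductive hypotheses are preserved.

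Since at every stage $a_{k_n+1}$ has infinitely many admissible values, the inductive step allows at least two genuinely distinct continuations; iterating gives a binary tree of valid sequences of cardinality $2^{\aleph_0}$, and hence uncountably many ergodic directions. The main obstacle will be executing the inductive step while simultaneously satisfying the algebraic constraint (cumulative homology in $\langle h^-\rangle$ up to sign), the geometric constraint ($y_{n+1}$ back in $[a,b]$ with irrational coordinates), and the area constraint (large $a_{k_n+1}$). I expect that overcoming this will require allowing the block length to depend on $n$ and making careful use of the relations (\ref{eq:matrixrel}) among $h^\pm$ and $\omega$, in combination with Lemmas~\ref{lem:basic} and~\ref{lem:mn}, to algebraically reduce cumulative words to powers of $h^-$ at suitably chosen even stopping times $k_n$.
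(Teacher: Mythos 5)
Your overall strategy is exactly the paper's: apply Theorem~\ref{theorem:ergodicity} at $z_0=(0,\lambda)$, build the continued fraction block by block so that the cumulative induced homology action fixes $\beta$ (equivalently, reduces in $PGL(2,\Z)$ to a power of $h^-$, as you correctly observe), keep $y_n$ in a fixed compact subinterval of $(0,1/2)$, insert a large digit to meet the area condition $a_{k_n+1}\geq 2/(1-2y_n)$, and extract uncountability from the infinitely many admissible choices at each stage. The paper does all of this, with the interval $J=[1/6,1/3]$ and blocks of length $8$.

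However, the step you flag as ``the main obstacle'' is not a detail to be deferred: it is the entire technical content of the proof (the paper's Lemma~\ref{lem:irrhom}), and your sketch does not show it can be carried out. The difficulty is that you cannot ``cancel the disruption modulo $\langle h^-\rangle$'' greedily, because by Lemma~\ref{lem:basic} the sign in each factor $h^\pm_*(z_i)=(h^\pm)^{\pm 1}$ is dictated by whether $z_i$ lies in $S$, i.e.\ by the geometry of the orbit, not by free choice; and the exponents $m^\pm_n(z)$ you can realize are Birkhoff sums over a rotation orbit, so they cannot be prescribed independently of the point you land on. The paper resolves this by an explicit eight-factor word $h_{a,b,c,d}=(h^+)^{a}h^-h^+(h^-)^{b}h^+h^-(h^+)^{c}(h^-)^{d}$: the exponents $a,b$ are chosen (via density of the rotation orbit) to steer the intermediate points to prescribed positions near the boundary of $S$, forcing the four isolated factors to contribute exactly $(h^-)^{-1}$, $h^+$, $(h^+)^{-1}$, $h^-$ in that order; the relations $h^+(h^-)^{-1}h^+=\omega$ and $\omega\, h^\pm\,\omega^{-1}=(h^\mp)^{-1}$ from \eqref{eq:matrixrel} then collapse the cumulative word to $(h^+)^{a'-1}\omega(h^-)^{b'-1}\omega^{-1}(h^+)^{c'}(h^-)^{d'}=(h^+)^{a'-b'+c'}(h^-)^{d'}$, and the crucial matching condition $c'=b'-a'$ is achievable precisely because of Lemma~\ref{lem:mn}(ii) (the exponent sequences take \emph{all} natural values, so one can arrange $b'>a'$ and then hit $b'-a'$ exactly). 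Only the final exponent $d$ remains free to place $y'$ in $J$, and it is this freedom (in $d_n$, not in the large digit) that the paper uses for uncountability. Without producing such a word --- or an equivalent mechanism guaranteeing simultaneous satisfiability of the algebraic, geometric and area constraints --- your induction has no verified base for its inductive step, so as written the argument is a correct plan rather than a proof.
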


This result follows from Theorem~\ref{theorem:ergodicity}
and the following auxiliary lemma.

\begin{lemma}\label{lem:irrhom}
Let $\lambda\in(0,1/2)$ be an irrational number and let
$J\subset(0,1/2)$ be  a closed interval. Suppose that
$z=(x,y)=g^{-1}(0,\lambda)$ for some $g\in SL_+(2,\Z)$ so that
$y>0$. Then there exist  natural numbers $a,b,c,d$ such that if
$z'=(x',y')=\big(h_{a,b,c,d}\big)^{-1}(x,y)$ with
\[h_{a,b,c,d}=(h^+)^{a}\cdot h^-\cdot h^+\cdot(h^-)^{b}\cdot h^+\cdot
h^-\cdot(h^+)^{c}\cdot(h^-)^{d}\]
then
\[y'\in J\quad\text{ and }\quad\big(h_{a,b,c,d}\big)_*(z')\,\beta=\beta.\]
Moreover, the number $a$ can be chosen arbitrary large.
\end{lemma}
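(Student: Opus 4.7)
The plan is to compute $(h_{a,b,c,d})_*(z')$ by the cocycle rule \eqref{eq:comphom}, then collapse the resulting eight-letter word using the Coxeter-type relations \eqref{eq:matrixrel} under a carefully chosen pattern of $S$-membership indicators from Lemma~\ref{lem:basic}. Writing $h_{a,b,c,d} = g_1 g_2 \cdots g_8$ with $g_1 = (h^+)^a$, $g_8 = (h^-)^d$ and the remaining factors as in the statement, set $z_0 := z'$ and $z_i := g_{9-i}z_{i-1}$, so that $z_7=(h^+)^{-a}z$, $z_5=(h^+)^{-1}(h^-)^{-1}z_7$, $\dots$, $z_8=z$. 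Applying \eqref{eq:comphom} factorwise and invoking Lemma~\ref{lem:basic} at each step yields
\[
(h_{a,b,c,d})_*(z') = (h^+)^{M_8}(h^-)^{\epsilon_2}(h^+)^{\epsilon_3}(h^-)^{M_5}(h^+)^{\epsilon_5}(h^-)^{\epsilon_6}(h^+)^{M_2}(h^-)^{M_1},
\]
where $M_8=m_a^+(z)$, $M_5=m_b^-(z_5)$, $M_2=m_c^+(z_2)$, $M_1=m_d^-(z_1)$, and each $\epsilon_j\in\{\pm1\}$ equals $+1$ or $-1$ according to whether the relevant intermediate $z_i$ belongs to the set $S$ of Lemma~\ref{lem:basic}.

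I will target the pattern $(\epsilon_2,\epsilon_3,\epsilon_5,\epsilon_6)=(-1,+1,+1,-1)$. The identities $(h^-)^{-1}h^+=(h^+)^{-1}\omega$ and $h^+(h^-)^{-1}=\omega(h^+)^{-1}$ (immediate consequences of $h^+(h^-)^{-1}h^+=\omega$ in \eqref{eq:matrixrel}) turn the inner block into $(h^+)^{-1}\omega(h^-)^{M_5}\omega(h^+)^{-1}$. Since $\omega h^-\omega^{-1}=(h^+)^{-1}$ and $\omega^2=-\id$, one has $\omega(h^-)^{M_5}\omega=-(h^+)^{-M_5}$, and the entire product collapses to $-(h^+)^{M_8+M_2-M_5-2}(h^-)^{M_1}$. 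Since $(h^-)^{M_1}\beta=\beta$, $(h^+)^k\beta=\beta+k\alpha$, and since $(h_{a,b,c,d})_*(z')$ is well defined only in $PGL(2,\Z)$ (so $-\beta\equiv\beta$), the required condition $(h_{a,b,c,d})_*(z')\beta=\beta$ from \eqref{eq:invbeta} reduces to the single arithmetic identity $M_5=M_8+M_2-2$.

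Next I realize the $\epsilon$-pattern geometrically. Using $(h^+)^{-1}(x,y)=(x-y,y)$, $(h^-)^{-1}(x,y)=(x,y-x)$ together with mod-$1$ reduction into $[-1/2,1/2)^2$, a direct case analysis shows that $\epsilon_2=-1$ is equivalent to $x_7:=(x-ay)\bmod 1\in[-1/2,\,y-1/2]$, an interval of length exactly $y>0$, and that this condition forces $x_5+y_5=x_7\in[-1/2,0)$, so that $\epsilon_3=+1$ is automatic. An analogous computation gives $\epsilon_5=+1\Leftrightarrow x_5-y_4\in[-1/2,1/2)$ and, within that case, $\epsilon_6=-1\Leftrightarrow 2y_4-x_5\notin[-1/2,1/2)$, where $y_4=(y_5-bx_5)\bmod 1$; both constraints cut out sets of positive measure in $y_4$ with non-empty intersection. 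Because $z=g^{-1}(0,\lambda)$ with $\lambda$ irrational and $g\in SL_+(2,\Z)$, both $y$ and $x_5$ are irrational, so Weyl equidistribution of $\{ay\bmod 1\}_a$ produces infinitely many $a\geq a_0$ satisfying the $\epsilon_2$ constraint, and equidistribution of $\{bx_5\bmod 1\}_b$ yields a positive-density set of $b$'s simultaneously realising $\epsilon_5=+1$ and $\epsilon_6=-1$.

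Finally I coordinate magnitudes. Having fixed $a$ (so $M_8$ is determined), the divergence $m_b^-(z_5)\to+\infty$ from Lemma~\ref{lem:mn}(i) ensures that within the positive-density $\epsilon$-set there are arbitrarily large $b$'s with $M_5:=m_b^-(z_5)\geq M_8-1$; set $M_2:=M_5-M_8+2\geq 1$ and apply Lemma~\ref{lem:mn}(ii) to produce $c\in\N$ with $m_c^+(z_2)=M_2$, which enforces the arithmetic identity. The parameter $d$ affects only $y'=(y_1-dx_1)\bmod 1$, and since $x_1$ is irrational a further equidistribution argument yields $d\in\N$ with $y'\in J$; the freedom in the initial choice of $a$ makes it arbitrarily large, as required. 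The main technical point is precisely the joint selection of $b$: one must verify that the positive-density $\epsilon$-set intersects $\{b:m_b^-(z_5)\geq M_8-1\}$, which reduces to combining the unboundedness of the former (from positive density) with the global divergence of the latter, and requires a little care to argue that the Birkhoff sum defining $m_b^-$ cooperates with the auxiliary rotation constraints on $y_4$.
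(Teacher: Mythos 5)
Your argument is correct and is structurally the same as the paper's: walk the point backwards through the eight factors, record an $S$-membership sign from Lemma~\ref{lem:basic} at each of the four single-letter positions, collapse the resulting word using the relations \eqref{eq:matrixrel}, and match the surviving exponents via Lemma~\ref{lem:mn} and equidistribution. The one genuine difference is the target pattern: the paper forces $(\epsilon_2,\epsilon_3,\epsilon_5,\epsilon_6)=(-1,+1,-1,+1)$, which produces a conjugation $\omega(h^-)^{\ast}\omega^{-1}$ and the exponent identity $c'=b'-a'$ with no sign ambiguity, whereas your $(-1,+1,+1,-1)$ produces $\omega(h^-)^{M_5}\omega$, hence an overall minus sign and the identity $M_5=M_8+M_2-2$; since $g_*(z)$ is only defined in $PGL(2,\Z)$ and the strip construction only needs $k(C)=\pm1$, the sign is harmless and your pattern works equally well. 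The paper also pins the intermediate points explicitly (choosing $x-ay=-1/2+\varepsilon_1$ with $\varepsilon_1<\tfrac12\min(y,1/2-y)$, and later $y_4=1/2-\varepsilon_2$), which makes the remaining $\epsilon_j$ deterministic consequences rather than positive-measure events to be intersected; your measure-theoretic version is fine (your computation that the two constraints on $y_4$ leave a set of measure at least $1/2-|x_5|>0$ is correct because $|x_5|\leq y<1/2$), but it creates the bookkeeping you flag at the end. Two remarks on that bookkeeping. First, the ``main technical point'' you worry about is actually immediate: by Lemma~\ref{lem:mn}(i) the set $\{b:m_b^-(z_5)\geq M_8-1\}$ is cofinite, and a cofinite set meets any infinite set, in particular your positive-density set of admissible $b$; no interaction between the Birkhoff sum and the rotation constraint needs to be analysed. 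Second, the only place where you are genuinely terse is the irrationality of the intermediate coordinates ($x_5$, both coordinates of $z_5$ and of $z_2$, and $x_1$), which you need both for Lemma~\ref{lem:mn} and for equidistribution; this is exactly where the paper invokes Remark~\ref{rem:irratcoeff} by checking that the accumulated matrix has positive entries, and you should either do the same or observe that each potential rational coincidence excludes only finitely many values of the free parameter being chosen at that stage.
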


\begin{remark}\label{rem:irratcoeff}
Note that the irrationality of $\lambda$ implies the irrationality
of $y$. If additionally the matrix $g$ has positive entries then
$x$ is also irrational.
\end{remark}

\begin{proof}[Proof of Lemma~\ref{lem:irrhom}]
Since $y$ is irrational (see  the previous Remark), each orbit of the rotation $\T\ni t \mapsto t-y\in\T$ is dense in $\T=[-1/2,1/2)$, so there exists a
natural number $a$ such that $x-ay=-1/2+\vep_1 \mod 1$ in $\T$, where
\[0<\vep_1<\frac{1}{2}\min\big(y,1/2-y\big).\]
In addition, by Lemma~\ref{lem:mn}, one can assume that the number $a$ is
arbitrarily large and $a': =m_a^+(z)$ is positive. Then
\[z_1=(x_1,y_1):=(h^+)^{-a}(x,y)=(x-ay\modulo 1,y\modulo 1)=(-1/2+\vep_1,y)\]
and
\begin{equation}\label{eq:homz1}
(h^+)^{a}_*(z_1)=(h^+)^{a'}.
\end{equation}
Let
\begin{align*}
z_2&=(x_2,y_2):=(h^-)^{-1}(x_1,y_1)=(x_1\modulo 1,y_1-x_1\modulo
1)\\&=(-1/2+\vep_1,y-1/2-\vep_1).
\end{align*}
Since $y<1/2$, we have $x_2+y_2=y-1<-1/2$, so that $z_2\notin S$.
Let
\begin{align*}
z_3&=(x_3,y_3):=(h^+)^{-1}(x_2,y_2)=(x_2-y_2\modulo 1,y_2\modulo
1)\\&=(2\vep_1-y,y-1/2-\vep_1).
\end{align*}
Since $0<\vep<1/2$, we have $x_3+y_3=\vep_1-1/2\in(-1/2,0)$, so that $z_3\in S$. Moreover, as $2\vep_1<y$, we have $x_3=2\vep_1-y<0$. Since $x_3$ is
irrational (see Remark~\ref{rem:irratcoeff} for the matrix $g\cdot (h^+)^a\cdot h^-\cdot h^+$ with has positive entries), by Lemma~\ref{lem:mn}, there
exists a natural  number $b$ such that $b':=m_b^-(z_3)>a'$ and $y_3-bx_3=1/2-\vep_2 \mod 1$ in $\T$, where
\[0<\vep_2<\frac{1}{2}\min\big(|x_3|,1/2-|x_3|\big)=\frac{1}{2}\min\big(y-2\vep_1,1/2-y+2\vep_1\big).\]
Therefore
\[z_4=(x_4,y_4):=(h^-)^{-b}(x_3,y_3)=(x_3\modulo 1,y_3-bx_3\modulo 1)=(2\vep_1-y,1/2-\vep_2)\]
and
\begin{equation}\label{eq:homz4}
(h^-)^{b}_*(z_4)=(h^-)^{b'}.
\end{equation}
Let
\begin{align*}
z_5&=(x_5,y_5):=(h^+)^{-1}(x_4,y_4)=(x_4-y_4\modulo 1,y_4\modulo 1)\\
&=(2\vep_1+1/2+\vep_2-y,1/2-\vep_2).
\end{align*}
Since $y<1/2$ and $\vep_1>0$, we have $x_5+y_5=2\vep_1+1-y>1/2$,
so that $z_5\notin S$. Let
\begin{align*}
z_6&=(x_6,y_6):=(h^-)^{-1}(x_5,y_5)=(x_5\modulo 1,y_5-x_5\modulo 1)\\
&=(2\vep_1+1/2+\vep_2-y,-2(\vep_2+\vep_1)+y).
\end{align*}
Since $0<\vep_2<1/2$, we have $x_6+y_6=1/2-\vep_2\in(0,1/2)$, so
that $z_6\in S$. Since $y_6$ is irrational (see
Remark~\ref{rem:irratcoeff}), by Lemma~\ref{lem:mn}, there exists
a natural number $c$ such that $c':=m_c^-(z_6)=b'-a'$. Setting
\[z_7=(x_7,y_7):=(h^+)^{-c}(x_6,y_6)\]
we have
\begin{equation}\label{eq:homz7}
(h^+)^{c}_*(z_7)=(h^+)^{c'}.
\end{equation}
Since $x_7$ is irrational (see Remark~\ref{rem:irratcoeff}), there
exists a natural number $d$ such that $y_7-dx_7\mod 1\in J$. Let
$d':=m^-_d(z_7)$. Setting
\[z'=(x',y'):=(h^-)^{-d}(x_7,y_7)=(x_7,y_7-dx_7\modulo 1),\]
we obtain $y'\in J$ and
\begin{equation}\label{eq:homzprim}
(h^-)^{d}_*(z')=(h^-)^{d'}.
\end{equation}
Next note that
\begin{align*}
\big(&h_{a,b,c,d}\big)_*(z')\\
&=(h^+)^{a}_*(z_1)\cdot h^-_*(z_2)\cdot h^+_*(z_3)
\cdot(h^-)^{b}_*(z_4)\cdot h^+_*(z_5)\cdot h^-_*(z_6)
\cdot(h^+)_*^{c}(z_7)\cdot(h^-)_*^{d}(z').
\end{align*}
Since $z_3,z_6\in S$ and $z_2,z_5\notin S$, by Lemma~\ref{lem:basic},
\[(h^+)_*(z_3)=h^+,\quad(h^-)_*(z_6)=h^-,\quad(h^-)_*(z_2)=(h^-)^{-1},\quad(h^+)_*(z_5)=(h^+)^{-1}.\]
If we combine this with \eqref{eq:homz1}-\eqref{eq:homzprim}, we obtain
\[\big(h_{a,b,c,d}\big)_*(z')=(h^+)^{a'}\cdot (h^-)^{-1}\cdot h^+
\cdot(h^-)^{b'}\cdot (h^+)^{-1}\cdot h^-
\cdot(h^+)^{c'}\cdot(h^-)^{d'}.\]
In view of \eqref{eq:matrixrel}, it follows that
\begin{align*}
\big(h_{a,b,c,d}\big)_*(z')&=(h^+)^{a'-1}\cdot \omega
\cdot(h^-)^{b'-1}\cdot \omega^{-1}
\cdot(h^+)^{c'}\cdot(h^-)^{d'}\\
&=(h^+)^{a'-1}\cdot(h^+)^{-b'+1}\cdot(h^+)^{c'}\cdot(h^-)^{d'}=(h^-)^{d'}.
\end{align*}
Consequently, $\big(h_{a,b,c,d}\big)_*(z')\,\beta=\beta$.
\end{proof}

\begin{proof}[Proof of Theorem~\ref{theorem:irrational}]
Let $J=[1/6,1/3]$. Using Lemma~\ref{lem:irrhom}, we can construct
a sequence $(z_n)_{n\geq 0}$ in $\T^2_0$ such that
$z_0=(0,\lambda)$ and for every $n\geq 1$ there exist natural
numbers $a_n,b_n,c_n,d_n$ with $a_n\geq 6$ such that
\begin{equation}\label{eq:abcdmain}
z_{n}=(h_{a_n,b_n,c_n,d_n})^{-1}z_{n-1}\quad\text{and}\quad(h_{a_n,b_n,c_n,d_n})_*(z_{n})\,\beta=\beta
\end{equation}
and $z_n=(x_n,y_n)\in(-1/2,1/2)\times(1/6,1/3).$

Let us consider the  irrational number
\[\alpha=[0;a_1,1,1,b_1,1,1,c_1,d_1,a_2,1,1,b_2,1,1,c_2,d_2,\ldots].\]
We will show that the directional flow along the vector $(1,\alpha)$ on
$\widetilde{M(0,\lambda)}_\beta$ is ergodic. In view of the proof of Lemma~\ref{lem:irrhom},
we have a freedom of choice of $d_n$ for fixed $a_n,b_n,c_n$. It follows that
the set of ergodic directions is uncountable.

Let $\alpha=[0;e_1,e_2,\ldots]$. In view of \eqref{eq:abcdmain}, we have
\[\big((h^{+})^{e_{1}}\cdots(h^{-})^{e_{8n}}\big)^{-1}\!
M(0,\lambda)=M(z_n),\
\big((h^{+})^{e_{1}}\cdots(h^{-})^{e_{8n}}\big)^{-1}_*(0,\lambda)\,
\beta=\beta.\]
Moreover,
\[y_n\in[1/6,1/3]\quad\text{and}\quad e_{8n+1}=a_{n+1}\geq 6\geq \frac{2}{1-2y_n}.\]
Finally Theorem~\ref{theorem:ergodicity} applied to the sequence $(k_n)_{n\geq 1}$, $k_n=8n$
yields the ergodicity of the directional flow along the vector $(1,\alpha)$.
\end{proof}

\section{Rational $\lambda$}\label{section:rational}
The aim of this section is to describe more precisely a subset
of ergodic directions on $\widetilde{M(0,\lambda)}_\beta$
whenever $\lambda\in(0,1/2)$ is rational. Such precise description
will help us to show that the Hausdorff dimension of the set of
ergodic directions is greater than $1/2$.

\begin{notation}
For every $\lambda=p/2q$ with $0<p<q$ relatively prime natural
numbers set
\begin{equation}\label{B}
B(\lambda):=\left\{
\begin{matrix}(3q-1,1,1,4q-1,1,1,q)& \text{ if $p$ is odd} \\
(2q+a,p-1,p+1,2q+2a, p-1,p+1,a) & \text{ if $p$ is even,}
\end{matrix}
\right.\end{equation}
where $a$ is the unique natural number satisfying $0<a\leq q$ and $ap=-1\mod q$.
\end{notation}

\begin{theorem}\label{theorem:rational}
Suppose that $\lambda=p/2q\in(0,1/2)$ with $p,q\in\Z$ relatively prime. For
every sequence of natural numbers $(n_k)_{k=1}^\infty$ if
\[\alpha=[0;B(\lambda),n_1,B(\lambda),n_2,\ldots,B(\lambda),n_k,\ldots]\]
then the directional flow along the vector $(1,\alpha)$ on
the $\mathbb{Z}$-cover $ \widetilde{M(0,\lambda)}_\beta$ given by $\beta$ is ergodic. Moreover, the
Hausdorff dimension of the set of such ergodic directions is
greater than $1/2$.
\end{theorem}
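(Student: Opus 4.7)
The plan is to apply Theorem~\ref{theorem:ergodicity} with $z_0 = (0, \lambda)$ and indices $k_n = 8n$ (which is even, as required). The block $B(\lambda)$ is designed so that the corresponding matrix word $h_B := (h^+)^{B_1} (h^-)^{B_2} (h^+)^{B_3} (h^-)^{B_4} (h^+)^{B_5} (h^-)^{B_6} (h^+)^{B_7}$, with $(B_1,\ldots,B_7) = B(\lambda)$, fixes $(0,\lambda)$ under the inverse action on $\T^2_0$ and induces a homology action fixing $\beta$. First, using $(h^+)^{-1}(x,y) = (x - y \modulo 1, y)$ and $(h^-)^{-1}(x,y) = (x, y - x \modulo 1)$, I would verify by direct calculation that $h_B^{-1}(0,\lambda) = (0,\lambda)$, treating the cases $p$ odd and $p$ even separately; the specific integers in $B(\lambda)$, including the modular inverse $a$ with $ap \equiv -1 \pmod q$, are chosen precisely to close this seven-step orbit. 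Along the way I would record, for each intermediate point $z_j$, whether $z_j \in S$: by Lemma~\ref{lem:basic} this determines whether the corresponding factor contributes $h^{\pm}$ or $(h^{\pm})^{-1}$ to the homology action. Multiplying these factors in order and applying the relations \eqref{eq:matrixrel} (in the same spirit as the cancellation at the end of the proof of Lemma~\ref{lem:irrhom}, where a chain collapsed to a single $(h^-)^{d'}$), the product $(h_B)_*((0,\lambda))$ collapses to an element fixing $\beta$.

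Handling the interleaved $n_k$'s is easier: since $(0,\lambda)$ has vanishing first coordinate, $(h^-)^{-n_k}(0,\lambda) = (0,\lambda)$, so the insertions do not move the base point; and since $(0,\lambda) \in S$ (as $0 + \lambda \in (0, 1/2)$), Lemma~\ref{lem:basic} gives $(h^-)_*((0,\lambda)) = h^-$, which fixes $\beta$ because the matrix $h^- = \bigl(\begin{smallmatrix} 1 & 0 \\ 1 & 1 \end{smallmatrix}\bigr)$ sends $(0,1)^\top$ to itself. Inductively, after $n$ super-blocks of total length $k_n = 8n$, we return to $z_n = (0,\lambda)$ with $y_n = \lambda$ and the homology action fixes $\beta$. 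To close the invocation of Theorem~\ref{theorem:ergodicity}, I would pick any closed interval $[a,b] \subset (0,1/2)$ containing $\lambda$ and verify that the first entry of $B(\lambda)$, either $3q-1$ (for $p$ odd) or $2q + a$ with $a \geq 1$ (for $p$ even), exceeds $2/(1 - 2\lambda) = 2q/(q-p)$; both reduce to elementary inequalities valid for all admissible $p, q$.

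For the Hausdorff dimension estimate, the set of constructed ergodic directions is parametrized by $(n_k) \in \N^\N$. Iterating the continued-fraction recursion $q_{m+1} = a_{m+1} q_m + q_{m-1}$ through the fixed block $B(\lambda)$ (which contributes a bounded constant factor $C(\lambda) > 1$, namely the norm of the associated $2 \times 2$ matrix) and the free entry $n_k$ (contributing $\asymp n_k$), one obtains $q_{8k} \asymp C(\lambda)^k \prod_{j=1}^k n_j$. Hence the continued-fraction cylinder $\{\alpha : n_j = c_j \text{ for } j \leq k\}$ has Euclidean length $\asymp C(\lambda)^{-2k} \prod c_j^{-2}$. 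Define a Bernoulli measure $\mu$ on $\N^\N$ with marginal $\mu_0(n) = n^{-t}/\zeta(t)$ for $t$ slightly greater than $1$, push it forward to the set of $\alpha$'s, and apply the mass distribution principle. Setting $s = t/2$, the ratio (cylinder mass) divided by (cylinder size)$^s$ simplifies to $(C(\lambda)^{t}/\zeta(t))^k$, since the $c_j$-dependent factors cancel exactly when $2s = t$. This is bounded once $t$ is close enough to $1$ that $\zeta(t) \geq C(\lambda)^t$, which holds since $\zeta(t) \to \infty$ as $t \to 1^+$. Upgrading the cylinder estimate to balls at the cost of a bounded multiplicative constant (a standard argument for continued-fraction Cantor sets), the Hausdorff dimension is at least $s = t/2 > 1/2$.

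The main obstacle is the homology bookkeeping in the first paragraph: verifying that the specific integers in $B(\lambda)$ are balanced so that the product of sign-dependent contributions from Lemma~\ref{lem:basic} collapses to an element fixing $\beta$, with the role of the modular inverse $a$ (for $p$ even) being precisely to enforce the cancellation on the correct residue class modulo $q$. This is a case-by-case calculation in $\T^2_0$ and in $\SL(2,\Z)$, analogous to Lemma~\ref{lem:irrhom} but with constants tailored to close a loop at $(0,\lambda)$.
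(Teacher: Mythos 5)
Your overall architecture matches the paper's: apply Theorem~\ref{theorem:ergodicity} with $z_0=(0,\lambda)$ and $k_n=8n$, show the block word fixes the point and acts trivially on $\beta$, observe that the interleaved $(h^-)^{n_k}$ are harmless because the first coordinate of $z_0$ vanishes and any power of $h^-$ fixes $\beta$ in homology, and check $a_{8k+1}\geq 2/(1-2\lambda)$. But there is a genuine gap at the step you yourself flag as the main obstacle: the claim that $(h_B)_*((0,\lambda))$ ``collapses to an element fixing $\beta$.'' You offer no mechanism for this collapse, and the analogy with Lemma~\ref{lem:irrhom} does not supply one. In Lemma~\ref{lem:irrhom} the cancellation $(h^+)^{a'-1}(h^+)^{-b'+1}(h^+)^{c'}=id$ is \emph{arranged}: the exponent $c'=m_c^-(z_6)$ is tuned to equal $b'-a'$ using Lemma~\ref{lem:mn}, which requires irrational coordinates and a free choice of $c$. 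Here the coordinates are rational, Lemma~\ref{lem:mn} is unavailable, and every exponent in $B(\lambda)$ is prescribed, so the net homology exponents of the long runs $(h^+)^{3q-1}$, $(h^-)^{4q-1}$, $(h^+)^{q}$ (each a signed count, via Lemma~\ref{lem:basic}, of how many iterates of a rational rotation land in $S$) must cancel for a structural reason. The paper's reason is the $\vartheta$-symmetry of the word: in Lemmas~\ref{lemma:oddgen} and \ref{lemma:evengen} one writes $g_z=(\vartheta\cdot\widetilde{g}_z\cdot\vartheta^{-1})\cdot\widetilde{g}_z$, proves that the half-word satisfies $\widetilde{g}_z\cdot M(z)=M(\pm\vartheta z)$ with $(\widetilde{g}_z)_*(z)=(h^-)^{\tilde b}\omega^{-1}(h^+)^{\tilde a}$ for \emph{unknown} integers $\tilde a,\tilde b$, and then uses \eqref{eq:vartheta}, \eqref{eq:minusidtrans} and \eqref{eq:matrixrel} to make the two halves cancel identically, with no need to evaluate $\tilde a,\tilde b$. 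Your ``direct calculation'' route would instead require computing those counts explicitly for arbitrary coprime $p,q$ and verifying a cancellation identity among them; you give no argument that this identity holds, and without the symmetry observation it is not clear how you would establish it uniformly in $p,q$. (A smaller omission of the same kind: you do not verify that the seven-step orbit actually returns to $(0,\lambda)$ for the stated exponents; in the paper this is the content of the congruences \eqref{eq:defab0} and \eqref{congruences0} together with Remark~\ref{remark:odd}.)

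The remainder of your argument is sound. The treatment of the $n_k$ insertions and of the inequality $a_{8k+1}\geq 2/(1-2\lambda)$ agrees with the paper. For the Hausdorff dimension you take a genuinely different but standard route: a Bernoulli measure with marginal $n^{-t}/\zeta(t)$ on the full alphabet $\N$ plus the mass distribution principle, exploiting $\zeta(t)\to\infty$ as $t\to 1^+$, whereas Proposition~\ref{proposition:dimension} truncates to a finite alphabet and invokes Falconer's Theorem 9.7, using the divergence of $\sum_l q_m^{-1}l^{-1}$. Your version buys a one-line verification of the pressure inequality at the cost of the cylinder-to-ball transfer for an infinite-alphabet construction, which you correctly note is standard but do not carry out; the paper's finite-alphabet reduction avoids that issue entirely. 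Either works, so the only substantive defect of the proposal is the unproved homology cancellation above.
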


The proof of the first part of Theorem~\ref{theorem:rational} is rather technical
and is postponed until the end of the section, where its more general version
Themorem~\ref{theorem:rationalgen} is proved. The proof is based on two technical
Lemmas~\ref{lemma:oddgen} and \ref{lemma:evengen} in which we find explicitly some elements of the
Veech group of the translation surface $\widetilde{M(z)}_\beta$ when both coordinates of $z$
are rational.

The estimate from below of the Hausdorff dimension in Theorem~\ref{theorem:rational}
follows immediately form the following result, whose proof is fairly standard,
but is included for the convenience of the reader.
\begin{proposition}\label{proposition:dimension}
For any $\overline{a}\in\N^m$ ($m\geq 0$) a set $B=b\N+c\subset\N$ ($b,c$ are integer with $b>0$ and $c\geq 0$)
the Hausdorff dimension of the set
\[\mathcal{E}(\overline{a})=\Big\{[0;\overline{a},n_1,\overline{a},n_2,\overline{a},n_3,\ldots]:
n_i\in B\text{ for }i\geq 1\Big\}\]
is greater than $1/2$.
\end{proposition}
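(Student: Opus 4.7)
The plan is to realise $\mathcal{E}(\overline{a})$ as the attractor of a countable conformal iterated function system coming from the Gauss map, and then to bound its Hausdorff dimension from below by restricting to a finite subsystem where a standard dimension formula applies.

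First I would set up the IFS. Writing $\overline{a}=(a_1,\ldots,a_m)$ and denoting by $T_a^{-1}(x)=1/(a+x)$ the $a$-th inverse branch of the Gauss map $x\mapsto\{1/x\}$, for each $n\in B$ I form the M\"obius map
\[ f_n := T_{a_1}^{-1}\circ\cdots\circ T_{a_m}^{-1}\circ T_n^{-1} : [0,1]\to [0,1]. \]
Unwinding the continued fraction expansion shows that $\mathcal{E}(\overline{a})$ is exactly the attractor of the countable IFS $\{f_n\}_{n\in B}$. The classical bounded distortion estimate for compositions of Gauss inverse branches together with the chain rule gives $|f_n'(x)|\asymp K_{\overline{a}}/n^2$ uniformly in $x\in[0,1]$ and $n\in B$, where $K_{\overline{a}}>0$ depends only on the fixed block $\overline{a}$. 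The images $f_n([0,1])$ are pairwise disjoint continued fraction cylinders, so the open set condition is automatic.

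Next I would pass to a finite subsystem. For each $N\in\N$ put $B_N:=B\cap[1,N]$ and let $E_N\subset\mathcal{E}(\overline{a})$ be the attractor of the finite subsystem $\{f_n\}_{n\in B_N}$. By Bowen's formula applied to this finite conformal IFS satisfying the open set condition (or, equivalently, by constructing a Bernoulli--Frostman measure with weights $p_n\propto n^{-2s}$ and applying the mass distribution principle, which is routine given bounded distortion), $\dim_H E_N$ is essentially the unique $s>0$ for which $\sum_{n\in B_N}(K_{\overline{a}}/n^2)^s=1$. In particular $\dim_H E_N>1/2$ as soon as
\[ \sum_{n\in B_N}\!\left(\frac{K_{\overline{a}}}{n^2}\right)^{\!1/2}\;>\;1. \]

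Finally, since $B=b\N+c$ is an arithmetic progression, the left-hand side equals $K_{\overline{a}}^{1/2}\sum_{n\in B_N}1/n$, which behaves like $(K_{\overline{a}}^{1/2}/b)\log N$ and hence diverges as $N\to\infty$. Thus the required inequality holds for all sufficiently large $N$, and for such $N$ we obtain $\dim_H\mathcal{E}(\overline{a})\geq\dim_H E_N>1/2$. The main technical step will be setting up bounded distortion and the open set condition cleanly and invoking the correct finite-IFS dimension formula; once these routine ingredients are in place, the strict inequality $\dim_H\mathcal{E}(\overline{a})>1/2$ follows directly from the divergence of the harmonic series along the arithmetic progression $B$.
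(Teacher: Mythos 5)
Your proposal is correct and follows essentially the same route as the paper: both realise $\mathcal{E}(\overline{a})$ as the attractor of the IFS of M\"obius maps $x\mapsto[0;\overline{a},n+x]$ indexed by $n\in B$, truncate to a finite subsystem, apply the standard lower bound for finite conformal IFS with disjoint images (the paper cites Theorem~9.7 of Falconer, which is the same tool as your Bowen/mass-distribution step), and conclude from the divergence of $\sum_{n\in B}n^{-1}$. The only point to handle with care, which the paper settles by assuming without loss of generality that $m\geq 3$ is odd, is ensuring the branches are genuine uniform contractions (e.g.\ when $m=0$ and $1\in B$ the single branch $x\mapsto 1/(1+x)$ is not), but this is the kind of routine adjustment you already flag.
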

\begin{proof}
Let $\overline{a}=a_1\ldots a_{m}$. It simplifies the argument,
and causes no loss of generality, to assume that $m\geq 3$ is odd.
For every $l\in \N$ let us consider the map
$\psi_{\overline{a},l}:[0,1]\to[0,1]$ given by
\[\psi_{\overline{a},l}(x)=[0;a_1,a_2,\ldots, a_{m}, l+x]=\frac{p_m(\overline{a})(l+x)+p_{m-1}(\overline{a})}{q_m(\overline{a})(l+x)+q_{m-1}(\overline{a})}.\]
Then
\begin{equation}\label{eq:rangpsi}
\psi_{\overline{a},l}([0,1])=\big[[0;a_1,a_2,\ldots, a_{m}, l],[0;a_1,a_2,\ldots, a_{m},l+1]\big]
\end{equation}
and for every $x\in[0,1]$ we have
\[\psi'_{\overline{a},l}(x)=\frac{1}{(q_m(\overline{a})(l+x)+q_{m-1}(\overline{a}))^2}\geq \frac{1}{(q_m(\overline{a})(l+1)+q_{m-1}(\overline{a}))^2}=:d_{\overline{a},l}\]
and
\[\psi'_{\overline{a},l}(x)\leq \frac{1}{(q_m(\overline{a})l+q_{m-1}(\overline{a}))^2}<1/4.\]
For every $u\in\N$ let $B_u=b\{1,\ldots,u\}+c$. Then
\[\mathcal{E}(\overline{a})\supset \mathcal{E}_u(\overline{a})=\bigcap_{k\geq 1}\bigcup_{(n_1,\ldots,n_k)\in (B_u)^k}
\psi_{\overline{a},n_{1}}\circ\psi_{\overline{a},n_{2}}\circ\ldots\circ\psi_{\overline{a},n_{k}}[0,1].\]
Let
\[E_u:=\big[[0;a_1,a_2,\ldots, a_{m},1],[0;a_1,a_2,\ldots, a_{m},u+1]\big].\]
In view of \eqref{eq:rangpsi}, $\psi_{\overline{a},l}(E_u)\subset E_u$ for every $l\in B_u$ and the intervals $\psi_{\overline{a},l}(E_u)$, $l\in B_u$ are pairwise disjoint. In view of Theorem~9.7 in \cite{Falconer}, if $s_u>0$ is the unique solution of the equation
$\sum_{l=1}^ud_{\overline{a},bl+c}^s=1$,
then $\dim_H(\mathcal{E}_u(\overline{a}))\geq s_u$.
Since $\sum_{l=1}^{\infty}d_{\overline{a},bl+c}^{1/2}=+\infty$, we can choose $u\in\N$ such that $\sum_{l=1}^{u}d_{\overline{a},bl+c}^{1/2}>1$.
Thus $s_u>1/2$, and we get
\[\dim_H(\mathcal{E}(\overline{a}))\geq \dim_H(\mathcal{E}_u(\overline{a}))\geq s_u>1/2,\]
as desired.
\end{proof}

From now on, we will deal with square tiled translation surfaces $M(z)$ for which
$z=(r/2q,s/2q)\in \T_0^2$ with $r,s,q\in \Z$, $|r|,|s|<q$, $s$ is non-zero and coprime with $q$.

\begin{lemma}\label{lemma:oddgen}
Suppose that at least one number $s$ or $r$ is odd.
Let $a,b$ be natural numbers such that
\begin{equation}\label{eq:defab0}
0<a,b\leq 2q\quad\text{and}\quad r+as=-q\modulo 2q,\quad bs+s-q=r\modulo 2q.
\end{equation}
Then setting
\[g_z:=(h^+)^{2q+b}\cdot h^-\cdot h^+\cdot(h^-)^{2q+a+b}\cdot
h^+\cdot h^-\cdot(h^+)^{a}\in SL_+(2,\Z)\] we have
\[g_z\cdot M(z)=M(z)\ \text{ and }\ (g_z)_*(z)=id.\]
\end{lemma}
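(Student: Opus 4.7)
The plan is to imitate the orbit-tracking argument of Lemma~\ref{lem:irrhom} from the irrational setting, exploiting the rationality of $z$ to close up the orbit and thereby realize $g_z$ as a Veech-group element of $M(z)$ whose induced action fixes the homology. Write $g_z = A_1 A_2 \cdots A_7$ with $A_1 = (h^+)^{2q+b}$, $A_2 = h^-$, $A_3 = h^+$, $A_4 = (h^-)^{2q+a+b}$, $A_5 = h^+$, $A_6 = h^-$, $A_7 = (h^+)^a$, and define inductively $z_0 := z$ and $z_{i+1} := A_{i+1}^{-1}(z_i)$. The goal is to verify (i) $z_7 = z_0$, whereupon $g_z \cdot M(z) = M(g_z z) = M(z)$ by \eqref{eq:fund}, and (ii) $(g_z)_*(z) = id$.

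For (i), a direct modular computation shows (writing coordinates modulo $1$)
\[z_1 = \bigl(\tfrac{s-q}{2q}, \tfrac{s}{2q}\bigr),\ z_2 = \bigl(\tfrac{s-q}{2q}, -\tfrac{1}{2}\bigr),\ z_3 = \bigl(\tfrac{s}{2q}, -\tfrac{1}{2}\bigr),\ z_4 = \bigl(\tfrac{s}{2q}, \tfrac{s-q}{2q}\bigr),\]
\[z_5 = \bigl(-\tfrac{1}{2}, \tfrac{s-q}{2q}\bigr),\ z_6 = \bigl(-\tfrac{1}{2}, \tfrac{s}{2q}\bigr),\ z_7 = z_0.\]
The congruence $bs \equiv r - s + q \pmod{2q}$ is used at $z_0 \mapsto z_1$; its consequence $(a+b)s \equiv -s \pmod{2q}$ (obtained by adding the two hypotheses in \eqref{eq:defab0}) is used at $z_3 \mapsto z_4$; and $as \equiv -q - r \pmod{2q}$ is used at $z_6 \mapsto z_7$. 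All intermediate points avoid the four deleted points of $\T^2_0$ thanks to $s \neq 0$ and $|r|,|s| < q$.

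For (ii), iterated use of \eqref{eq:comphom} gives $(g_z)_*(z) = \prod_{i=1}^{7}(A_i)_*(z_i)$. Checking $x_i + y_i$ against $[-1/2, 1/2)$ shows $z_3, z_6 \in S$ and $z_2, z_5 \notin S$, so Lemma~\ref{lem:basic} supplies $(A_2)_*(z_2) = (h^-)^{-1}$, $(A_3)_*(z_3) = h^+$, $(A_5)_*(z_5) = (h^+)^{-1}$, $(A_6)_*(z_6) = h^-$. For the three power factors, iterating Lemma~\ref{lem:basic} writes $(A_1)_*(z_1) = (h^+)^{M_1}$, $(A_4)_*(z_4) = (h^-)^{M_4}$, $(A_7)_*(z_7) = (h^+)^{M_7}$, where $M_1 = m_{2q+b}^+(z_0)$, $M_4 = m_{2q+a+b}^-(z_3)$, $M_7 = m_a^+(z_6)$ in the notation of Lemma~\ref{lem:mn}. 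Since the rotation by $\pm s/(2q)$ on $\T$ has period $2q/\gcd(s, 2q)$ and the indicator $2\ind_{S_{s/(2q)}} - 1$ has mean $1 - |s|/q$, the $2q$-portion of $M_1$ and of $M_4$ each contributes exactly $2q - 2s$ (regardless of the parity of $s$), so
\[M_1 = 2q - 2s + m_b^+(z_0), \qquad M_4 = 2q - 2s + m_{a+b}^-(z_3).\]

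The main obstacle is establishing the arithmetic identity $M_1 + M_7 = M_4$, which after the above reduction amounts to $m_b^+(z_0) + m_a^+(z_6) = m_{a+b}^-(z_3)$. Here the $x$-coordinate of $z_3$ and the $y$-coordinate of $z_6$ are both $s/(2q)$, so $S_{s/(2q)}$ serves as the target interval in every sum, and the indicator arguments $x_6 - jy$ and $y_3 - jx$ both simplify to $(-q - js)/(2q)$. Splitting $m_{a+b}^-(z_3)$ at index $j = a$, the initial block $\sum_{j=1}^a$ is exactly $m_a^+(z_6)$, while the tail $\sum_{j=a+1}^{a+b}$ is brought into the form of $m_b^+(z_0)$ via $j = a + k$ and the congruence $as \equiv -q - r \pmod{2q}$, which turns $(-q - (a+k)s)/(2q)$ into $(r - ks)/(2q)$. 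This gives $M_1 + M_7 = M_4$; substituting into the product and collapsing via $h^+ (h^-)^{-1} h^+ = \omega$, $h^- (h^+)^{-1} h^- = \omega^{-1}$, and $\omega (h^-)^k \omega^{-1} = (h^+)^{-k}$ from \eqref{eq:matrixrel} yields
\[(g_z)_*(z) = (h^+)^{M_1 - 1}\,\omega\,(h^-)^{M_4 - 1}\,\omega^{-1}\,(h^+)^{M_7} = (h^+)^{M_1 - M_4 + M_7} = id,\]
as required.
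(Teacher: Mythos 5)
Your argument is correct in substance for $s>0$ and establishes the lemma by a genuinely different route at the decisive step. The paper never computes the exponents of the three long power blocks: it factors $g_z=(\vartheta\cdot\widetilde{g}_z\cdot\vartheta^{-1})\cdot\widetilde{g}_z$ with $\widetilde{g}_z=(h^-)^{2q+b}\cdot h^+\cdot h^-\cdot(h^+)^{a}$, shows by the same orbit-tracking and Lemma~\ref{lem:basic} that $\widetilde{g}_z\cdot M(z)=M(\vartheta z)$ and $(\widetilde{g}_z)_*(z)=(h^-)^{\tilde b}\cdot\omega^{-1}\cdot(h^+)^{\tilde a}$ for \emph{unspecified} integers $\tilde a,\tilde b$, and then lets \eqref{eq:vartheta} and \eqref{eq:matrixrel} collapse the product to the identity with $\tilde a,\tilde b$ cancelling automatically. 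You instead compute all three exponents and reduce the conclusion to the arithmetic identity $M_1+M_7=M_4$, which you prove by matching Birkhoff sums of $2\ind_{S_{s/2q}}-1$ along the rotation by $s/2q$: the term-by-term identification of the first $a$ terms of $m^-_{a+b}(z_3)$ with $m^+_a(z_6)$ and the shift of the tail into $m^+_b(z_0)$ via $as\equiv -q-r \pmod{2q}$ are both correct, as is the value $2q-2s$ for each full block of $2q$ consecutive terms. (That exactness deserves one more line than the "period plus mean value" phrase you give: it holds because $S_{s/2q}$ is a \emph{half-open} arc whose length is an integer multiple of $1/P$, $P=2q/\gcd(s,2q)$, so it contains exactly $P(1-|s|/2q)$ points of each coset of the period lattice regardless of alignment.) What the paper's symmetry trick buys is reusability: the identical skeleton carries Lemma~\ref{lemma:evengen}; what your computation buys is explicit knowledge of the exponents.

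There is one genuine omission: you never reduce to $s>0$, and your proof as written fails for $s<0$. The explicit coordinates and, more importantly, the $S$-membership pattern $z_3,z_6\in S$, $z_2,z_5\notin S$ depend on the sign of $s$; for $-q<s<0$ one gets $x_1=(s+q)/2q$ and $z_2=((s+q)/2q,-1/2)$ with $x_2+y_2=s/2q\in(-1/2,0)$, so $z_2\in S$ and the signs produced by Lemma~\ref{lem:basic} flip, breaking your final cancellation. The repair is the paper's one-line reduction: the same $a,b$ satisfy \eqref{eq:defab0} for $-z$, hence $g_{-z}=g_z$, so one applies the positive case to $-z$ and transfers the conclusion back through \eqref{eq:minusidtrans} using $z\in E$. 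You should include this step.
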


\begin{proof}
Without loss of generality we can assume that $s> 0$. If $s<0$ then dealing with
the point $-z=(-r/2q,-s/2q)\in \T_0^2$ we have
$g_{-z}\cdot M(-z)=M(-z)$ and $(g_{-z})_*(-z)=id$.
As $z\in E$, in view of
\eqref{eq:minusidtrans}, we obtain
$g_{-z}\cdot M(z)=M(z)$ and $(g_{-z})_*(z)=id$, which is our claim.

Next note that, by \eqref{eq:matrixrel},
\[g_z=(\vartheta\cdot\widetilde{g}_z\cdot\vartheta^{-1})\cdot\widetilde{g}_z, \quad \text{where}
\quad
\widetilde{g}_z=(h^-)^{2q+b}\cdot
h^+\cdot h^-\cdot(h^+)^{a}.\]
This symmetry of $g_z$ combined with the fundamental Lemma~\ref{lem:basic} is the heart of the proof.
Indeed, using Lemma~\ref{lem:basic} we will prove that
\begin{equation}\label{eq:przedsym}
\widetilde{g}_z\cdot M(z)=M(\vartheta z),\quad (\widetilde{g}_z)_*(z)=(h^-)^{\tilde{b}}\cdot \omega^{-1}\cdot(h^+)^{\tilde{a}}
\end{equation}
for some integer $\tilde{a}$, $\tilde{b}$. In view of \eqref{eq:przedsym}, \eqref{eq:vartheta} and \eqref{eq:matrixrel},
it follows that
\begin{align}\label{eq:przedsymtheta}
\begin{aligned}
(\vartheta\cdot\widetilde{g}_z\cdot\vartheta^{-1}) M(\vartheta z)&=M(z),\\ (\vartheta\cdot\widetilde{g}_z\cdot\vartheta^{-1})_*(\vartheta z)&=\vartheta\cdot(h^-)^{\tilde{b}}\cdot \omega^{-1}\cdot(h^+)^{\tilde{a}}\cdot\vartheta^{-1}=(h^+)^{\tilde{b}}\cdot \omega\cdot(h^-)^{\tilde{a}}.
\end{aligned}
\end{align}
Combining \eqref{eq:comphom} with \eqref{eq:przedsym} and \eqref{eq:przedsymtheta} and using again \eqref{eq:matrixrel}, we have
\[\big((\vartheta\cdot\widetilde{g}_z\cdot\vartheta^{-1})\cdot\widetilde{g}_z\big)\cdot M(z)=M(z),\]
\begin{align*}
\big((\vartheta&\cdot\widetilde{g}_z\cdot\vartheta^{-1})\cdot\widetilde{g}_z\big)_*(z)=
(h^+)^{\tilde{b}}\cdot \omega\cdot(h^-)^{\tilde{a}}\cdot(h^-)^{\tilde{b}}\cdot \omega^{-1}\cdot(h^+)^{\tilde{a}}\\
&=
(h^+)^{\tilde{b}}\cdot \omega\cdot(h^-)^{\tilde{a}+\tilde{b}}\cdot \omega^{-1}\cdot(h^+)^{\tilde{a}}
=
(h^+)^{\tilde{b}}\cdot(h^+)^{-(\tilde{a}+\tilde{b})}\cdot (h^+)^{\tilde{a}}=id,
\end{align*}
which is our claim.

It remains to prove \eqref{eq:przedsym}. Since $s$ and $q$ are
coprime and $s$ or $r$ are odd, there exist  $a,b,a',b'\in\Z$ with
\begin{equation}\label{eq:defab}
0<a,b\leq 2q\quad\text{and}\quad r+as=2qa'-q,\quad bs+s-q=2qb'+r.
\end{equation} Thus,
\begin{align*}
&(h^+)^{a}(r/2q,s/2q)=((r+as)/2q,s/2q)=(a'-1/2,s/2q),\\
&(h^-)^{2q}(s/2q,r/2q)=(s/2q,(r+2qs)/2q)=(s/2q,r/2q+s),\\
&(h^-)^{b}(s/2q,s/2q-1/2)=(s/2q,(bs+s-q)/2q)=(s/2q,r/2q+b').
\end{align*}
%we have
%\begin{gather*}
%(h^+)^{a}M(z)=M(-1/2,s/2q),\ (h^-)^{2q}M(\vartheta z)=M(\vartheta z)\\
%(h^-)^{b}M(s/2q,s/2q-1/2)=M(\vartheta z).
%\end{gather*}
Hence, in view also of Lemma~\ref{lem:basic}, there exist
$k,l,m\in\Z$ such that
\begin{align}
\label{eq:f1gen}
&(h^+)^{a}M(z)=M(-1/2,s/2q),&&(h^+)^{q}_*(z)=(h^+)^{m},\\
\label{eq:f1+1/2gen}& (h^-)^{2q}M(\vartheta z)=M(\vartheta z),
&&(h^-)^{2q}_*(\vartheta z)=(h^-)^{k},\\
\label{eq:f2gen} &(h^-)^{b}M(s/2q,s/2q-1/2)=M(\vartheta z), &&
(h^-)^{b}_*(s/2q,s/2q-1/2)=(h^-)^{l}.
\end{align}
As $0< s<q$, we have $(-1/2,s/2q)\in S$ and $(-1/2,s/2q-1/2)\notin S$.
By  Lemma~\ref{lem:basic} and \eqref{eq:fund}, it follows that
\begin{align}
&h^-M\Big(-\frac{1}{2},\frac{s}{2q}\Big)=M\Big(-\frac{1}{2},\frac{s}{2q}-\frac{1}{2}\Big),\quad
(h^-)_*\Big(-\frac{1}{2},\frac{s}{2q}\Big)=h^-.\label{eq:f4gen}\\
&h^+M\Big(-\frac{1}{2},\frac{s}{2q}-\frac{1}{2}\Big)=M\Big(\frac{s}{2q},\frac{s}{2q}-\frac{1}{2}\Big),\quad
(h^+)_*\Big(-\frac{1}{2},\frac{s}{2q}-\frac{1}{2}\Big)=(h^+)^{-1}.\label{eq:f3gen}
\end{align}
Using
consequently \eqref{eq:f1gen}, \eqref{eq:f4gen}, \eqref{eq:f3gen},
\eqref{eq:f2gen}, \eqref{eq:f1+1/2gen} and
\eqref{eq:matrixrel}, we have
\[\widetilde{g}_z\cdot M(z)=((h^-)^{2q}\cdot(h^-)^{b}\cdot h^+\cdot h^-\cdot(h^+)^{a})\cdot M(z)=
M(\vartheta z)\] and
\begin{align*}(\widetilde{g}_z)_*(z)&= ((h^-)^{2q}\cdot(h^-)^{b}\cdot h^+\cdot
h^-\cdot(h^+)^{a})_*(z)\\&= (h^-)^{k}\cdot(h^-)^{l}\cdot
(h^+)^{-1}\cdot h^-\cdot (h^+)^{m}= (h^-)^{k+l-1}\cdot
\omega^{-1}\cdot (h^+)^{m}.
\end{align*}
This yields \eqref{eq:przedsym}, and the proof is complete.
\end{proof}
\begin{remark}\label{remark:odd}
Note that if $r=0$ then \eqref{eq:defab} holds for $a=q$ and $b=q-1$.
\end{remark}

\begin{lemma}\label{lemma:evengen}
Suppose that  $r$ and $s$ are both even. Let $a,b$ be natural numbers such that
\begin{equation}\label{congruences0}
0<a,b\leq q,\quad b=|s|,\quad\text{ and }\quad as+r=-1\mod q.
\end{equation}
Then setting
\[g_z:=(h^+)^{2q+a}\cdot(h^-)^{b-1}\cdot (h^+)^{b+1}\cdot(h^-)^{2q+2a}
\cdot(h^+)^{b-1}\cdot  (h^-)^{b+1}\cdot(h^+)^{a}\] we have
\[g_z \cdot M(z)=M(z)\quad\text{ and }\quad (g_z)_*(z)=id.\]
\end{lemma}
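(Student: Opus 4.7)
The plan is to mirror the strategy of Lemma~\ref{lemma:oddgen}. First I would exploit the palindromic structure of $g_z$: setting
\[
\widetilde{g}_z:=(h^-)^{2q+a}\cdot(h^+)^{b-1}\cdot(h^-)^{b+1}\cdot(h^+)^{a},
\]
a direct computation using $\vartheta\cdot h^{\pm}\cdot\vartheta^{-1}=h^{\mp}$ together with the telescoping $(h^-)^{a}\cdot(h^-)^{2q+a}=(h^-)^{2q+2a}$ gives the decomposition $g_z=(\vartheta\cdot\widetilde{g}_z\cdot\vartheta^{-1})\cdot\widetilde{g}_z$. Therefore, exactly as in the final paragraph of the proof of Lemma~\ref{lemma:oddgen}, it suffices to establish identities of the form
\[
\widetilde{g}_z\cdot M(z)=M(\vartheta z),\qquad (\widetilde{g}_z)_*(z)=(h^-)^{\tilde{a}}\cdot\omega^{-1}\cdot(h^+)^{\tilde{b}},
\]
for suitable integers $\tilde{a},\tilde{b}$; once these are in place, the relations \eqref{eq:matrixrel}, \eqref{eq:vartheta} and \eqref{eq:comphom} force $g_z\cdot M(z)=M(z)$ and $(g_z)_*(z)=id$ by the same telescoping computation (in which $\omega\cdot h^{-}\cdot\omega^{-1}=(h^{+})^{-1}$ is used to collapse $(h^{+})^{\tilde{a}}\omega(h^{-})^{\tilde{a}+\tilde{b}}\omega^{-1}(h^{+})^{\tilde{b}}$ to the identity).

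As a preliminary reduction, the involution $-id$ together with \eqref{eq:minusidtrans} allows us to assume $s>0$; moreover, since $s$ is even and coprime with $q$, the modulus $q$ is forced to be odd. These two parity facts will be used repeatedly to resolve residues modulo $2q$: for instance, the parity of $r+as$ (which is even) combined with the congruence $as+r\equiv-1\pmod q$ pins $r+as\equiv q-1\pmod{2q}$, and the oddness of both $s+1$ and $q$ is what computes $(s+1)q\equiv q\pmod{2q}$.

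The core of the proof is then a careful step-by-step tracking of the iterated action of $\widetilde{g}_z$ on $z=(r/(2q),s/(2q))$, in full analogy with the chain of identities \eqref{eq:f1gen}--\eqref{eq:f3gen} in the proof of Lemma~\ref{lemma:oddgen}. I would apply the four blocks $(h^+)^a$, $(h^-)^{b+1}$, $(h^+)^{b-1}$ and $(h^-)^{2q+a}$ one at a time, at each stage (i)~writing the new base point as an explicit pair of rationals with denominator $2q$ reduced to $[-1/2,1/2)$, and (ii)~testing it against the strip $S$ of Lemma~\ref{lem:basic} to decide whether the associated homology factor contributes $h^{\pm}$ or $(h^{\pm})^{-1}$. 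The numerical hypotheses of the statement are tailored to this bookkeeping: after $(h^+)^a$ the first coordinate lands at $(q-1)/(2q)=1/2-1/(2q)$; the choice $b=|s|$ then forces the second coordinate to reach the same value after $(h^-)^{b+1}$; and the remaining two blocks bring the configuration to the symmetric partner prescribed by the palindromic identity, with the corresponding homology word factoring through a single $\omega^{-1}$ as required.

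The main obstacle I expect is precisely this combinatorial bookkeeping. With exponents of size $q$ and $2q$, several intermediate points sit very close to the boundary of $\T^2_0$ and of the strip $S$, and a single misidentification of the representative in $[-1/2,1/2)$ or of the side of $S$ on which a point lies would flip some $h^{\pm}$ into $(h^{\pm})^{-1}$ and destroy the $\omega^{-1}$-factorisation needed for the cancellation. The hypotheses that $r$ and $s$ are both even (and hence that $q$ is odd) are exactly what ensure that every intermediate point retains denominator $2q$ and that every test against $S$ can be read off unambiguously; once the four-step trace is completed, the conclusion $(g_z)_*(z)=id$ and $g_z\cdot M(z)=M(z)$ follows by the purely algebraic final manipulation borrowed from Lemma~\ref{lemma:oddgen}.
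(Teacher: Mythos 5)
Your strategy is the paper's own: the same palindromic decomposition $g_z=(\vartheta\cdot\widetilde{g}_z\cdot\vartheta^{-1})\cdot\widetilde{g}_z$ with the same $\widetilde{g}_z$, the same reduction to $s>0$ via \eqref{eq:minusidtrans}, and the same plan of pushing $z$ through the four blocks while reading off each homology contribution from Lemma~\ref{lem:basic} and collapsing the resulting word with $\omega\cdot h^{-}\cdot\omega^{-1}=(h^{+})^{-1}$. There is, however, one structural slip. In the even case the intermediate surface is \emph{not} $M(\vartheta z)$ but $M(-\vartheta z)$: the trace runs $z\mapsto(1/2-1/(2q),\,s/(2q))\mapsto(1/2-1/(2q),\,1/2-1/(2q))\mapsto(-s/(2q),\,1/2-1/(2q))\mapsto-\vartheta z$, the last step being where the congruence on $as+r$ enters. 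With your claimed endpoint $M(\vartheta z)$, relation \eqref{eq:vartheta} alone would close the loop, but the claim is false; the correct endpoint forces a second use of \eqref{eq:minusidtrans} (legitimate because $\vartheta z$ and $-z$ lie in $E$) to upgrade $(\vartheta\cdot\widetilde{g}_z\cdot\vartheta^{-1})\cdot M(\vartheta z)=M(-z)$ to $(\vartheta\cdot\widetilde{g}_z\cdot\vartheta^{-1})\cdot M(-\vartheta z)=M(z)$. Since $g_*(-w)=g_*(w)$ on $E$, the homology telescoping you describe survives unchanged, so the error is repairable --- but as written your intermediate identity \eqref{eq:przedsymeven}-analogue is wrong, and it is exactly the kind of sign you warned yourself about. (This is also why the even case, unlike the odd one, genuinely needs the $-id$ symmetry in the middle of the argument and not merely in the reduction to $s>0$.)

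The second point is that the bookkeeping you defer is where essentially all the work of the paper's proof lives, and it is not a routine membership check: the decisive fact is that along the orbit $(h^+)^i(j/(2q),\,1/2-1/(2q))$, $0\le i\le j<q$, the point lies outside $S$ for $i$ even and inside $S$ for $i$ odd, so the factors $h^{\pm1}$ produced by Lemma~\ref{lem:basic} cancel in consecutive pairs; this is what yields $(h^+)^{b}_*(s/(2q),1/2-1/(2q))=id$ and $(h^+)^{b-1}_*(1/2-1/(2q),1/2-1/(2q))=(h^+)^{-1}$, and it uses the evenness of $b=|s|$ in an essential way. Your outline correctly identifies the landing points $(1/2-1/(2q),\cdot)$ and the role of $b=|s|$, so the program would go through once this alternation and the exact residues are verified, but as it stands the proposal establishes the reduction and not the core identity.
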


\begin{proof}
Without loss of generality we can assume
that $s>0$, otherwise we can use again \eqref{eq:minusidtrans} to pass to the positive case.

The general strategy is much the same as for the proof of Lemma~\ref{lemma:oddgen}.
Indeed, note that
\[g_z=(\vartheta\cdot\widetilde{g}_z\cdot\vartheta^{-1})\cdot\widetilde{g}_z, \quad \text{where}
\quad
\widetilde{g}_z=(h^-)^{2q+a}
\cdot(h^+)^{b-1}\cdot  (h^-)^{b+1}\cdot(h^+)^{a}\]
and we only need to show
\begin{equation}\label{eq:przedsymeven}
\widetilde{g}_z\cdot M(z)=M(-\vartheta z),\quad (\widetilde{g}_z)_*(z)=(h^-)^{\tilde{b}}\cdot \omega^{-1}\cdot(h^+)^{\tilde{a}}
\end{equation}
for some integer $\tilde{a}$, $\tilde{b}$.
Indeed, applying \eqref{eq:vartheta} together
with \eqref{eq:matrixrel} we have
\begin{equation*}
(\vartheta\cdot\widetilde{g}_z\cdot\vartheta^{-1}) M(\vartheta z)=M(-z),\quad (\vartheta\cdot\widetilde{g}_z\cdot\vartheta^{-1})_*(\vartheta z)=(h^+)^{\tilde{b}}\cdot \omega\cdot(h^-)^{\tilde{a}}.
\end{equation*}
Since $\vartheta z,-z\in E$, in view of \eqref{eq:minusidtrans}, it follows that
\begin{equation}\label{eq:przedsymthetaeven}
(\vartheta\cdot\widetilde{g}_z\cdot\vartheta^{-1}) M(-\vartheta z)=M(z),\quad (\vartheta\cdot\widetilde{g}_z\cdot\vartheta^{-1})_*(-\vartheta z)=(h^+)^{\tilde{b}}\cdot \omega\cdot(h^-)^{\tilde{a}}.
\end{equation}
Combining \eqref{eq:comphom} with \eqref{eq:przedsymeven} and \eqref{eq:przedsymthetaeven}, and using again \eqref{eq:matrixrel}, we have
\[\big((\vartheta\cdot\widetilde{g}_z\cdot\vartheta^{-1})\cdot\widetilde{g}_z\big)\cdot M(z)=M(z),\]
\begin{align*}
\big((\vartheta\cdot\widetilde{g}_z\cdot\vartheta^{-1})\cdot\widetilde{g}_z\big)_*(z)=
(h^+)^{\tilde{b}}\cdot \omega\cdot(h^-)^{\tilde{a}}\cdot(h^-)^{\tilde{b}}\cdot \omega^{-1}\cdot(h^+)^{\tilde{a}}=id,
\end{align*}
which is our claim.

It remains to prove \eqref{eq:przedsymeven}. As $s$ is even and
coprime with $q$, $q$ is odd. Since $r+1$ is also odd, there exist
integers $a,a'$ with
\begin{equation}\label{congruences}
a'\text{ is odd },\quad 0<a\leq q\quad\text{ and }\quad as+r=a'q-1,
\end{equation}
and set $b:=s$. Since
\begin{align*}
(h^+)^{a}(r/2q,s/2q)&=((r+as)/2q,s/2q)=(-1/2q+a'/2,s/2q),\\
(h^-)^{2q}(-s/2q,-r/2q)&=(-s/2q,-r/2q-s),\\
(h^-)^{a}(-s/2q,1/2-1/2q)&=(-s/2q,(q-1-as)/2q)\!=\!(-s/2q,(1-a')/2-r/2q),
\end{align*}
by Lemma~\ref{lem:basic} and \eqref{congruences}, there exists
$k,l,m\in\Z$ such that
\begin{align}
%\begin{split}
\label{eq:f10gen} &(h^+)^{a}M(z)=M(1/2-{1}/{2q},{s}/{2q}),\quad &&
(h^+)^{a}_*(z)=(h^+)^{m}
%\end{split}
\\
%\begin{split}
\label{eq:f10+1/2gen}& (h^-)^{2q}M(-\vartheta z)=M(-\vartheta z),\quad &&
(h^-)^{2q}_*(-\vartheta z)=(h^-)^{k}
\\
%\end{split}\\
%\begin{split}
\label{eq:f11gen}
& (h^-)^{a}M\Big(-\frac{s}{2q},\frac{1}{2}-\frac{1}{2q}\Big)=M(-\vartheta
z),\qquad &&
(h^-)^{a}_*\Big(-\frac{s}{2q},\frac{1}{2}-\frac{1}{2q}\Big)=(h^-)^{l}.
%\end{split}
\end{align}
One can verify that, for $i,j \in \mathbb{Z}$ with $0\leq i \leq j < q$, one has
\begin{equation*}
\begin{split}
(h^+)^i \Big(\frac{j}{2q},\frac{1}{2}-\frac{1}{2q}\Big)&=\Big(\frac{j}{2q}+\frac{i}{2}-\frac{i}{2q}\modulo 1 \, ,\frac{1}{2}-\frac{1}{2q}\modulo 1\, \Big) \\
& = \left\{ \begin{array}{ll} \left(  \frac{j-i}{2q}, \frac{1}{2}-\frac{1}{2q}\right), & \textrm{for $i$ even, } \\ \left(  \frac{j-i}{2q} - \frac{1}{2}, \frac{1}{2}-\frac{1}{2q}\right), & \textrm{for $i$ odd. }  \end{array}\right.
\end{split}
\end{equation*}
If in addition $i<j$, one can also check that
\begin{equation*}
\left(  \frac{j-i}{2q}, \frac{1}{2}-\frac{1}{2q}\right) \notin S, \qquad   \left(  \frac{j-i}{2q} - \frac{1}{2}, \frac{1}{2}-\frac{1}{2q}\right) \in S.
\end{equation*}
Thus, by Lemma~\ref{lem:basic}, we have
\begin{equation*}
(h^+)_*   \left( (h^+)^i \left(\frac{j}{2q},\frac{1}{2}-\frac{1}{2q}\right)\right)
=\left\{ \begin{array}{ll} (h^+)^{-1} & \textrm{for $i$ even}, \\ h^+ & \textrm{for
 $i$ odd}  \end{array}\right.
\end{equation*}
Since  $1\leq b=s<q$ and $s$ is even by assumption, it follows by induction that
\begin{equation}
\begin{split}
\label{eq:f14gen}
&(h^+)^bM\Big(\frac{s}{2q},\frac{1}{2}-\frac{1}{2q}\Big)=M\Big(0,\frac{1}{2}-\frac{1}{2q}\Big),\\
&(h^+)^b_*\Big(\frac{s}{2q},\frac{1}{2}-\frac{1}{2q}\Big)=id;
\end{split}
\end{equation}
\begin{equation}
\begin{split}\label{eq:f15gen}
&(h^+)^{b-1}M\Big(\frac{1}{2}-\frac{1}{2q},\frac{1}{2}-\frac{1}{2q}\Big)=M\Big(-\frac{s}{2q},\frac{1}{2}-\frac{1}{2q}\Big),\\
&(h^+)^{b-1}_*\Big(\frac{1}{2}-\frac{1}{2q},\frac{1}{2}-\frac{1}{2q}\Big)=(h^+)^{-1}.
\end{split}
\end{equation}
Thus, recalling \eqref{eq:matrixrel} and applying \eqref{eq:vartheta} to
\eqref{eq:f14gen}, we have
\begin{align}
\begin{split}\label{eq:f16gen}
(h^-)^bM\Big(\frac{1}{2}-\frac{1}{2q},\frac{s}{2q}\Big)&=M\Big(\frac{1}{2}-\frac{1}{2q},0\Big),\quad
(h^-)^b_*\Big(\frac{1}{2}-\frac{1}{2q},\frac{s}{2q}\Big)=id.
\end{split}
\end{align}
In view of Lemma~\ref{lem:basic}, we have also
\begin{align}
\begin{split}\label{eq:f17gen}
h^-M\Big(\frac{1}{2}-\frac{1}{2q},0\Big)&=M\Big(\frac{1}{2}-\frac{1}{2q},\frac{1}{2}-\frac{1}{2q}\Big),\quad
(h^-)_*\Big(\frac{1}{2}-\frac{1}{2q},0\Big)=h^-.
\end{split}
\end{align}
Thus, using consequently \eqref{eq:f10gen}, \eqref{eq:f16gen},
\eqref{eq:f17gen}, \eqref{eq:f15gen}, \eqref{eq:f11gen}
and  \eqref{eq:f10+1/2gen}, % (twice), % and \eqref{eq:matrixrel},
 we have
\begin{equation*}\label{eq:f18gen}
(\widetilde{g}_z)\cdot M(z)=((h^-)^{2q+a}\cdot(h^+)^{b-1}\cdot h^-\cdot
(h^-)^b\cdot(h^+)^{a})\cdot M(z)= M(-\vartheta z)
\end{equation*} and using the same series of equations together with \eqref{eq:matrixrel} we also have
\begin{align*}
\begin{split}\label{eq:f19gen}
(\widetilde{g}_z)_*(z)&=((h^-)^{2q+a}\cdot(h^+)^{b-1}\cdot h^-\cdot
(h^-)^b\cdot(h^+)^{a})_*(z)\\&= (h^-)^{k+l}\cdot (h^+)^{-1}\cdot
h^-\cdot id\cdot (h^+)^{m}= (h^-)^{k+l-1}\cdot \omega^{-1}\cdot
(h^+)^{m}.
\end{split}
\end{align*}
This yields \eqref{eq:przedsymeven} and completes the proof.
\end{proof}

\begin{notation}
Let $z=(r/2q,s/2q)\in \T_0^2$, where $r,s,q$ are integer numbers with
$|r|,|s|<q$ and such that $s\neq 0$ is coprime with $q$.
Suppose that
\begin{equation}\label{B1}
B(z):=\left\{
\begin{matrix}(2q+b,1,1,2q+a+b,1,1,a)& \text{ if $s$ or $r$ is odd} \\
(2q+a,b-1,b+1,2q+2a, b-1,b+1,a) & \text{ if $s$ and $r$ are even,}
\end{matrix}
\right.\end{equation}
where $a,b$ are the natural numbers satisfying \eqref{eq:defab0} or
\eqref{congruences0} respectively.
\end{notation}

The following result is a more general version of Theorem~\ref{theorem:rational}.
\begin{theorem}\label{theorem:rationalgen}
Suppose that $z=(r/2q,s/2q)\in \T_0^2$, where $r,s,q$ are integer numbers with
$|r|,|s|<q$.  Additionally, assume that  $s\neq 0$ is coprime with $q$. For
every sequence of natural numbers $(n_k)_{k=1}^\infty$ in $2q\N$ if
\[\alpha=[0;B(z),n_1,B(z),n_2,\ldots,B(z),n_k,\ldots]\]
then the directional flow along the vector $(1,\alpha)$ on
the $\mathbb{Z}$-cover $ \widetilde{M(z)}_\beta$ given by $\beta$ is ergodic. Moreover, the
Hausdorff dimension of the set of such ergodic directions is
greater than $1/2$.

Moreover, if $r=0$ then ergodicity holds also for each sequence of natural numbers $(n_k)_{k=1}^\infty$.
\end{theorem}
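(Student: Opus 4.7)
The plan is to verify the hypotheses of Theorem~\ref{theorem:ergodicity} with the choice $k_n := 8n$, and then to obtain the Hausdorff dimension bound directly from Proposition~\ref{proposition:dimension} applied with $\overline{a} = B(z)$ and $B = 2q\N$.

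The key observation is that the seven-entry block $B(z)$ records exactly the exponents of the alternating $h^{\pm}$-factorization of the element $g_z \in SL_+(2,\Z)$ produced by Lemma~\ref{lemma:oddgen} or Lemma~\ref{lemma:evengen} (according to the parity of $r$ and $s$). Writing $\alpha = [0; a_1, a_2, \ldots]$ for the continued fraction expansion obtained by concatenating $B(z), n_1, B(z), n_2, \ldots$ and setting
\[W_n := (h^+)^{a_1}(h^-)^{a_2}\cdots(h^-)^{a_{8n}} = g_z\,(h^-)^{n_1}\,g_z\,(h^-)^{n_2}\cdots g_z\,(h^-)^{n_n},\]
the two lemmas supply $g_z\, z = z$ and $(g_z)_*(z) = \operatorname{id}$, while the hypothesis $n_k \in 2q\N$ forces $(h^-)^{n_k}(r/(2q), s/(2q)) = (r/(2q), s/(2q) + n_k r/(2q)) = z$ in $\T^2_0$, since $n_k r/(2q) \in \Z$. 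Reading the factorization from right to left gives $W_n z = z$, so $z_n := W_n^{-1} z_0 = z$, which establishes the first identity in \eqref{eq:zalerg}. For the homology identity, since $W_n z = z$, applying the cocycle relation \eqref{eq:comphom} to $W_n W_n^{-1} = \operatorname{id}$ yields $(W_n^{-1})_*(z) = ((W_n)_*(z))^{-1}$ in $PGL(2,\Z)$, so it suffices to check $(W_n)_*(z)\beta = \beta$. Expanding $(W_n)_*(z)$ by \eqref{eq:comphom}, every intermediate base point equals $z$ and the factors $(g_z)_*(z) = \operatorname{id}$ disappear, leaving $(W_n)_*(z) = \prod_{i=1}^n ((h^-)^{n_i})_*(z)$. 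By iterated use of Lemma~\ref{lem:basic}, each factor $((h^-)^{n_i})_*(z)$ is a product of copies of $h^-$ and $(h^-)^{-1}$; both of these matrices fix the coordinate vector $\beta = (0,1)^T$ in the standard basis, so the whole product fixes $\beta$.

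It remains to verify the quantitative conditions of Theorem~\ref{theorem:ergodicity} and to extract the additional statements. Assuming without loss of generality $s > 0$ (otherwise replace $z$ by $-z$ via \eqref{eq:minusidtrans}), the value $y_n = s/(2q)$ is constant and lies in any compact interval $[a,b] \subset (0, 1/2)$ one chooses around it. The first partial quotient of $B(z)$ following position $k_n = 8n$ equals $2q + b$ (odd case) or $2q + a$ (even case), hence is at least $2q + 1$; the elementary estimate $(2q + 1)(q - s) \geq 2q$, valid because $1 \leq s \leq q - 1$, gives $a_{k_n + 1} \geq 2q/(q - s) = 2/(1 - 2 y_n)$. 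Theorem~\ref{theorem:ergodicity} then produces ergodicity in direction $(1, \alpha)$, and Proposition~\ref{proposition:dimension} supplies the Hausdorff dimension bound. For the final assertion, when $r = 0$ the computation $(h^-)^{n_k}(0, s/(2q)) = (0, s/(2q))$ holds for every $n_k \in \N$, so the restriction $n_k \in 2q\N$ can be dropped. I expect the main subtlety to lie in the homology bookkeeping within the second paragraph, where the stronger identity $(g_z)_*(z) = \operatorname{id}$ from Lemmas~\ref{lemma:oddgen} and~\ref{lemma:evengen}, rather than a weaker $(g_z)_*(z)\beta = \beta$, is crucial.
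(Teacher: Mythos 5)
Your proposal is correct and follows essentially the same route as the paper: both reduce the theorem to Theorem~\ref{theorem:ergodicity} with $k_n=8n$ by combining $(g_z)_*(z)=id$ from Lemmas~\ref{lemma:oddgen}--\ref{lemma:evengen} with the fact that $(h^-)^n$ fixes $M(z)$ and $\beta$ for $n\in 2q\N$ (for all $n$ when $r=0$), and then invoke Proposition~\ref{proposition:dimension} for the dimension bound. Your write-up merely makes explicit a few steps the paper leaves implicit (the cocycle expansion of $(W_n)_*(z)$ and the passage from $(W_n)_*$ to $(W_n^{-1})_*$).
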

\begin{proof}
By Lemma~\ref{lem:basic},
\begin{equation}\label{eq:hminus}
(h^-)^n\cdot M(z)=M(z)\quad\text{ and }\quad (h^-)^n_*(z)\ \beta=\beta
\end{equation}
for every $n\in 2q\N$. Moreover, if $r=0$ then \eqref{eq:hminus} is valid for every natural $n$.
In view of Lemmas~\ref{lemma:oddgen} and \ref{lemma:evengen}, it follows that
\[( g_z\cdot(h^-)^n)\cdot M(z)=M(z)\text{ and } ( g_z\cdot (h^-)^n)_*(z)\, \beta=\beta.\]
Setting $\alpha=[0,a_1,a_2,a_3,\ldots]$, it follows that
\begin{align*}
((h^{+})^{a_{1}}\cdots(h^{-})^{a_{8k}})^{-1}\cdot
M(z)&=M(z)\\
((h^{+})^{a_{1}}\cdots(h^{-})^{a_{8k}})^{-1}_*(z)\
\beta&=\beta.
\end{align*}
for every  natural $k$. Moreover,
\[a_{8k+1}>2q\geq\frac{2q}{q-s}=\frac{2}{1-2\frac{s}{2q}}.\]
In view of Theorem~\ref{theorem:ergodicity}, this gives the
ergodicity of the directional flow in the direction of $(1,\alpha)$
on $\widetilde{M(z)}_\beta$. The lower bound on the
Hausdorff dimension of the set of ergodic directions then follows
directly from Proposition~\ref{proposition:dimension}.
\end{proof}

\section{Acknowledgements}
We would like to thank Pascal Hubert and Barak Weiss who  gave us
preliminary versions of the paper \cite{Hu-We} and explained us
Theorem  \ref{thm:Hu-We} and Vincent Delecroix for useful
discussions. This collaboration was partially supported by the
EPSRC Grant EP/I019030/1 and by the Narodowe Centrum Nauki Grant
DEC-2011/03/B/ST1/00407.

\end{document}